\theoremstyle{plain}
\newtheorem{corollary}{\bf Corollary}
\newtheorem{lemma}{\bf Lemma}
\newtheorem{theorem}{\bf Theorem}
\numberwithin{equation}{section}
\begin{document}
	
	\title[Rigidity and nonexistence of spacelike hypersurfaces]{Rigidity and nonexistence of complete spacelike hypersurfaces in the steady state space}
	
	\author[W.F.C. Barboza, H.F. de Lima, M.A.L. Vel\'{a}squez]{Weiller F.C. Barboza$^1$, Henrique F. de Lima$^{1,\ast}$\\ and Marco Antonio L. Vel\'{a}squez$^1$}
	
	\address{$^1$ Departamento de Matem\'atica, Universidade Federal de Campina Grande, 58.429-970 Campina Grande, Para\'iba, Brazil.}
	\email{weiller@mat.ufcg.edu.br}
	\email{henrique@mat.ufcg.edu.br}
	\email{marco.velasquez@mat.ufcg.edu.br}
	
	\subjclass[2010]{Primary 53C42; Secondary 53A10, 53C20, 53C50.}
	
	\keywords{Steady state space; spacelike hypersurfaces; spacelike hyperplanes; higher order mean curvatures.}
	
	\thanks{$^\ast$ Corresponding author.}
	
	\begin{abstract}
		We study complete spacelike hypersurfaces immersed in an open region of the de Sitter space $\mathbb{S}^{n+1}_{1}$ which is known as the {\em steady state space} $\mathcal{H}^{n+1}$. In this setting, under suitable constraints on the behavior of the higher order mean curvatures of these hypersurfaces, we prove that they must be spacelike hyperplanes of $\mathcal{H}^{n+1}$. Nonexistence results concerning these spacelike hypersurfaces are also given. Our approach is based on a suitable extension of the Omori-Yau's generalized maximum principle due to Al\'{\i}as, Impera and Rigoli in~\cite{Alias:12}.
	\end{abstract}
	
	\maketitle
	
	\section{Introduction}
	
	Let $\mathbb{L}^{n+2}$ denote the $(n+2)$-dimensional Lorentz-Minkowski space ($n\geq2$), that is, the real vector space $\mathbb{R}^{n+2}$ endowed with the
	Lorentz metric defined by
	$$\left\langle v,w\right\rangle ={\displaystyle\sum\limits_{i=1}^{n+1}}v_{i}w_{i}-v_{n+2}w_{n+2},$$
	for all $v,w\in\mathbb{R}^{n+2}$. We define the $(n+1)$-dimensional de Sitter space $\mathbb{S}_{1}^{n+1}$ as the following hyperquadric of $\mathbb{L}^{n+2}$
	$$\mathbb{S}_{1}^{n+1}=\left\{p\in\mathbb L^{n+2}:\left\langle p,p\right\rangle=1\right\}.$$
	The induced metric from $\left\langle ,\right\rangle $ makes $\mathbb{S}_{1}^{n+1}$ a Lorentz manifold with constant sectional curvature one.
	Moreover, for all $p\in\mathbb{S}_{1}^{n+1}$, we have
	$$T_{p}\left(\mathbb{S}_{1}^{n+1}\right)=\left\{v\in\mathbb{L}^{n+2}:\left\langle v,p\right\rangle=0\right\}.$$
	Let $a\in\mathbb{L}^{n+2}\setminus\{0\}$ be a past-pointing null vector, that is, $\left\langle a,a\right\rangle=0$ and $\left\langle a,e_{n+2}\right\rangle>0$, where $e_{n+2}=(0,\dots,0,1)$. Then, the open region of the de Sitter space $\mathbb{S}_{1}^{n+1}$, given by
	$$\mathcal{H}^{n+1}=\left\{p\in\mathbb{S}_{1}^{n+1}:\left\langle p,a\right\rangle >0\right\}$$
	is the so-called {\em steady state space}.
	
	The importance of considering $\mathcal{H}^{n+1}$ comes from the fact that, in Cosmology, $\mathcal H^4$ is the steady state model of the universe proposed by Bondi-Gold~\cite{bgold} and Hoyle~\cite{hoyle}, when looking for a model of the universe which looks the same not only at all points and in all directions (that is, spatially isotropic and homogeneous), but also at all times. For more details, we recommend for the readers to see Section 5.2 of \cite{hawking} or Section 14.8 of \cite{Sw}.
	
	From a mathematical point of view, the interest in the study of spacelike hypersurfaces immersed in a Lorentzian space is motivated by their nice Bernstein-type properties. In this direction, several authors have approached the problem of to characterizing spacelike hyperplanes of $\mathcal H^{n+1}$, which are totally umbilical spacelike hypersurfaces isometric to the Euclidean space $\mathbb R^n$ and give a complete foliation of $\mathcal H^{n+1}$. We refer to readers, for instance, the works~\cite{albujer,Aquino:15,CH10,CH12,sm03}.
	
	Proceeding with this picture, our purpose in this article is to apply a suitable extension of the generalized maximum principle of Omori~\cite{Omori:67} and Yau~\cite{Yau:75} due to Al\'{\i}as, Impera and Rigoli in~\cite{Alias:12} (see Lemma~\ref{lemma:Maximum Principle}) in order to establish rigidity results for complete spacelike hypersurfaces of $\mathcal{H}^{n+1}$. For this, we will assume certain appropriate constraints on the behavior of the higher order mean curvatures. In this setting, We also obtain some nonexistence results concerning these spacelike hypersurfaces.
	
	This paper is organized in the following manner: initially, in Section~\ref{sec:preliminaries}, we recall some basic facts concerning the higher order mean curvatures, the Newton transformations and the linearized operators naturally attached to a spacelike hypersurface immersed in $\mathcal{H}^{n+1}$. In the last part of this same section, we also quote some key lemmas that will be essential to prove our main results. Afterwards, in Section~\ref{sec:main results} we establish our rigidity and nonexistence results for complete spacelike hypersurfaces of $\mathcal{H}^{n+1}$ (see Theorems~\ref{thm1-H},~\ref{thm2-H} and~\ref{thm3-H}, and Corollaries~\ref{crl1},~\ref{nonexistence:1} and~\ref{nonexistence:2}). Finally, in Section~\ref{sec:further results} we obtain further rigidity results considering that the height and angle functions of the spacelike hypersurfaces are linearly related (see Theorems~\ref{thm4-H} and~\ref{thm5-H}, and Corollary~\ref{crl2-H}).
	
	\section{Preliminaries and key lemmas}\label{sec:preliminaries}
	
	Along this article, we will deal with connected spacelike hypersurfaces $\psi:\Sigma\rightarrow\mathcal{H}^{n+1}$ with a future-pointing normal unitary timelike vector field $N$. Let us denote by $A:\mathfrak X(\Sigma)\rightarrow\mathfrak X(\Sigma)$ the Weingarten endomorphism of $\Sigma^n$ with respect to $N$. We recall that at each point $p\in\Sigma^{n},$ the Weingarten operator $A$ restricts to a self-adjoint linear map $A_{p}:T_{p}\Sigma\rightarrow T_{p}\Sigma.$ For $0\leq r\leq n,$ let $S_{r}(p)$ denote the $r$-th elementary symmetric function on the eigenvalues of $A_{p}.$ Hence, one gets $n$ smooths functions $S_{r}:\Sigma\rightarrow\mathbb{R},$ such that
	$$\det(tI-A)=\sum_{k=0}^n(-1)^kS_kt^{n-k},$$
	where $S_0=1$ by convention. If $p\in\Sigma^n$ and $\{e_k\}$ is a basis of $T_p\Sigma$ formed by eigenvectors of $A_p$, with corresponding eigenvalues $\{\lambda_k\}$, one immediately sees that
	$$S_r=\sigma_r(\lambda_1,\ldots,\lambda_n),$$
	where $\sigma_r\in\mathbb R[X_1,\ldots,X_n]$ is the $r$-th elementary symmetric polynomial on the indeterminates $X_1,\ldots,X_n$.
	
	In the sequel, with this setting, we define the $r$-th mean curvature $H_r$ of $\Sigma^n$, $0\leq r\leq n$, by
	\begin{equation}\label{Hr}
		{n\choose r}H_r=(-1)^{r}S_r=\sigma_r(-\lambda_1,\ldots,-\lambda_n).
	\end{equation}
	We observe that $H_0=1$, while $H_1=-\frac{1}{n}S_1$ is the usual mean curvature $H$ of $\Sigma^n$.
	
	For $0\leq r\leq n$, one defines the $r$-th Newton transformation $P_r$ on $\Sigma^n$ by setting $P_0=I$ (the identity operator) and, for $1\leq r\leq n$, via the recurrence relation
	\begin{equation}\label{eq:Newton operators}
		P_r={n\choose r}H_rI+AP_{r-1}.
	\end{equation}
	With a trivial induction, from \eqref{eq:Newton operators} we verify that
	\begin{equation}\label{eq:Newton operators2}
		P_r={n\choose r}H_rI + {n\choose r-1}H_{r-1}A+{n\choose r-2}H_{r-2}A^2+\cdots+A^r,
	\end{equation}
	so that Cayley-Hamilton theorem gives $P_n=0$. Moreover, since $P_r$ is a polynomial in $A$ for every $r$, it is also self-adjoint and commutes with $A$. Therefore, all bases of $T_p\Sigma$ diagonalizing $A$ at $p\in\Sigma^n$ also diagonalize all of the $P_r$ at $p$. So, let $\{e_1,\ldots,e_n\}$ be an orthonormal frame on $T_p\Sigma$ which diagonalizes $A_p$, $A_p(e_i)=\lambda_i(p)e_i$, then from \eqref{eq:Newton operators2} we have that
	\begin{equation}\label{spectrum of Pr}
		(P_r)_pe_i=(-1)^r\sum_{i_1<\ldots<i_r,i_j\neq i}\lambda_{i_1}(p)\ldots\lambda_{i_r}(p)e_i.
	\end{equation}
	Moreover, it is not difficult to check that $P_re_i=(-1)^rS_r(A_i)e_i$ and, consequently, we obtain the following lemma (see Lemma $2.1$ of~\cite{Barbosa:97}).
	
	\begin{lemma}\label{lemma:traces formulas} With the above notations, the following formulas hold:
		\begin{enumerate}
			\item[(a)] $S_r(A_i)=S_r-\lambda_iS_{r-1}(A_i)$;
			\item[(b)] ${\rm tr}(P_r)=(-1)^r\displaystyle\sum_{i=1}^nS_r(A_i)=(-1)^r(n-r)S_r=c_rH_r$;
			\item[(c)] ${\rm tr}(AP_r)=(-1)^r\displaystyle\sum_{i=1}^n\lambda_iS_r(A_i)=(-1)^r(r+1)S_{r+1}=-c_rH_{r+1}$;
			\item[(d)] ${\rm tr}(A^2P_r)=(-1)^r\displaystyle\sum_{i=1}^{n}\lambda_i^2 S_r(A_i)=\displaystyle{{n\choose r+1}}(nHH_{r+1}-(n-r-1)H_{r+2})$,
		\end{enumerate}
		where $c_r=(n-r)\displaystyle{n\choose r}$.
	\end{lemma}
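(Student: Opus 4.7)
The plan is to first establish the combinatorial identity (a) purely at the level of elementary symmetric polynomials, then use it together with the relation $(P_r)_p e_i=(-1)^r S_r(A_i)e_i$ already noted in the text to deduce (b), (c), (d) by summing over the diagonalizing frame $\{e_1,\ldots,e_n\}$.

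For (a), I would split $\sigma_r(\lambda_1,\ldots,\lambda_n)$ into those monomials that contain the factor $\lambda_i$ and those that do not, giving $S_r=S_r(A_i)+\lambda_i S_{r-1}(A_i)$, and rearrange. For (b), since $(P_r)_p e_i=(-1)^r S_r(A_i)e_i$, we get $\mathrm{tr}(P_r)=(-1)^r\sum_i S_r(A_i)$; next I would observe combinatorially that each monomial $\lambda_{i_1}\cdots\lambda_{i_r}$ of $S_r$ is counted exactly $n-r$ times in the sum $\sum_i S_r(A_i)$ (once for every index $i\notin\{i_1,\ldots,i_r\}$), so $\sum_i S_r(A_i)=(n-r)S_r$; the identification with $c_r H_r$ then follows from $(-1)^r S_r=\binom{n}{r}H_r$ and the definition $c_r=(n-r)\binom{n}{r}$. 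For (c), the same viewpoint gives $\mathrm{tr}(AP_r)=(-1)^r\sum_i\lambda_i S_r(A_i)$, and this time each monomial $\lambda_{j_1}\cdots\lambda_{j_{r+1}}$ of $S_{r+1}$ is produced exactly $r+1$ times (one for each choice of which of its indices plays the role of $i$), so the sum equals $(r+1)S_{r+1}$; the simplification $(r+1)\binom{n}{r+1}=(n-r)\binom{n}{r}=c_r$ together with the sign yields $-c_r H_{r+1}$.

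The only real calculation is (d). My plan is to use (a) twice to trade the factor $\lambda_i^2$ for expressions in $S_{r+1}$ and $S_{r+2}$. Concretely, (a) applied at level $r+1$ gives $\lambda_i S_r(A_i)=S_{r+1}-S_{r+1}(A_i)$, and applied at level $r+2$ gives $\lambda_i S_{r+1}(A_i)=S_{r+2}-S_{r+2}(A_i)$. Multiplying the first by $\lambda_i$ and substituting yields
\begin{equation*}
\lambda_i^2 S_r(A_i)=\lambda_i S_{r+1}-S_{r+2}+S_{r+2}(A_i).
\end{equation*}
Summing over $i$ and using $\sum_i\lambda_i=S_1$ together with the identity $\sum_i S_{r+2}(A_i)=(n-r-2)S_{r+2}$ established in the proof of (b), I would obtain $\sum_i\lambda_i^2 S_r(A_i)=S_1 S_{r+1}-(r+2)S_{r+2}$.

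Finally, I would convert back to mean curvatures using (\ref{Hr}): $S_1=-nH$, $S_{r+1}=(-1)^{r+1}\binom{n}{r+1}H_{r+1}$, $S_{r+2}=(-1)^{r+2}\binom{n}{r+2}H_{r+2}$, together with the routine simplification $(r+2)\binom{n}{r+2}=(n-r-1)\binom{n}{r+1}$, which produces precisely $\binom{n}{r+1}(nHH_{r+1}-(n-r-1)H_{r+2})$ after multiplying by $(-1)^r$. No step is really an obstacle; the main care is bookkeeping of the alternating signs coming from (\ref{Hr}) and of the binomial identities, especially in (c) and (d). A citation of Lemma~2.1 of~\cite{Barbosa:97} could legitimately replace the proof, but the argument above is short and self-contained.
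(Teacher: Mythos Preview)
Your argument is correct in every part; the combinatorial splitting for (a), the counting arguments for (b) and (c), and the two-step use of (a) in (d) together with the binomial identity $(r+2)\binom{n}{r+2}=(n-r-1)\binom{n}{r+1}$ all check out, including the sign bookkeeping.

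As for comparison: the paper does not actually prove this lemma. It simply records the eigenvalue relation $P_r e_i=(-1)^r S_r(A_i)e_i$ and then refers the reader to Lemma~2.1 of~\cite{Barbosa:97}. Your write-up therefore goes beyond what the paper does, supplying a short self-contained derivation in place of the citation. This is a genuine addition rather than a different route to the same end: you are filling in the details the authors chose to outsource. If anything, your final remark that ``a citation of Lemma~2.1 of~\cite{Barbosa:97} could legitimately replace the proof'' is exactly what the paper opts for.
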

	
	Associated to each Newton transformation $P_{r}$ one has the second-order linear differential operator $L_{r}$, defined by
	\begin{equation}\label{definition:Lr}
		L_{r}f={\rm tr}(P_{r}\nabla^2f),
	\end{equation}
	where $\nabla^2f:\mathfrak{X}(\Sigma)\rightarrow\mathfrak{X}(\Sigma)$ denotes the self-adjoint linear operator metrically equivalent to the Hessian of $f$, which is given by
	$$\langle\nabla^2f(X),Y\rangle=\langle\nabla_X\nabla f,Y\rangle,$$
	for all $X,Y\in\mathfrak{X}(\Sigma)$.
	
	In particular, for $r=0,$ we get the well known Laplacian operator $L_{0}=\Delta$, which is always elliptic. The next lemma gives a geometric condition which guarantees the ellipticity of $L_{1}$ (cf. Lemma 3.2 of~\cite{ac07}).
	
	\begin{lemma}\label{lemma:positiveness of P1}
		Let $\psi:\Sigma^n\rightarrow\mathcal{H}^{n+1}$ be a spacelike hypersurface in $\mathcal{H}^{n+1}$. If $H_2>0$ on $\Sigma^n$, then $L_1$ is elliptic or, equivalently, $P_1$ is positive definite (for an appropriate choice of orientation $N$).
	\end{lemma}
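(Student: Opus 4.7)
The plan is to reduce the ellipticity of $L_1$ to a spectral condition on $P_1$ and then to verify the required positivity of the eigenvalues by a short algebraic computation based on Newton's identity. Since $P_1$ is self-adjoint and commutes with $A$, any orthonormal eigenbasis $\{e_1,\ldots,e_n\}$ of $A_p$ also diagonalizes $P_{1,p}$. Writing $\mu_k = -\lambda_k$, so that $\binom{n}{r}H_r = \sigma_r(\mu_1,\ldots,\mu_n)$ by \eqref{Hr}, formula \eqref{spectrum of Pr} with $r = 1$ gives
\[
(P_1)_p\, e_i \;=\; \Bigl(-\sum_{j\neq i}\lambda_j\Bigr)\, e_i \;=\; \bigl(\sigma_1(\mu) - \mu_i\bigr)\, e_i.
\]
Hence the whole problem reduces to showing that, for an appropriate orientation of $N$, one has $\sigma_1(\mu) - \mu_i > 0$ at every point of $\Sigma^n$ and for every index $i$.

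I would first fix the orientation by means of the elementary identity
\[
\sigma_1(\mu)^2 \;=\; \sum_{j=1}^n \mu_j^2 \;+\; 2\,\sigma_2(\mu).
\]
Since $H_2 > 0$ translates to $\sigma_2(\mu) > 0$, this identity forces $\sigma_1(\mu)^2 \geq 2\sigma_2(\mu) > 0$, so $H_1$ is nowhere zero. Replacing $N$ by $-N$ if necessary (an operation that negates $A$, hence flips the sign of $H_1$ but preserves $H_2$), I may assume $H_1 > 0$, i.e., $s := \sigma_1(\mu) > 0$ throughout $\Sigma^n$.

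The key step is then to deduce from $s > 0$ and $\sigma_2(\mu) > 0$ that $s - \mu_i > 0$ for every $i$. I would argue by contradiction: if $s - \mu_i \leq 0$ for some $i$, then $\mu_i \geq s > 0$, and squaring yields $\mu_i^2 \geq s^2$; on the other hand, the identity above and the trivial bound $\sum_j \mu_j^2 \geq \mu_i^2$ give $s^2 \geq \mu_i^2 + 2\sigma_2(\mu)$, which together force $\sigma_2(\mu) \leq 0$, contradicting $H_2 > 0$. Hence every eigenvalue of $P_{1,p}$ is strictly positive, $P_1$ is positive definite, and $L_1 = \mathrm{tr}(P_1\nabla^2\cdot)$ is elliptic. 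I do not expect any serious obstacle here, since the content of the lemma is ultimately a statement about elementary symmetric functions of real numbers; the only delicate moment is pinning down the orientation, and that is forced by the Newton identity in a single line.
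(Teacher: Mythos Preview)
Your argument is correct. The paper does not actually prove this lemma but simply quotes it as Lemma~3.2 of~\cite{ac07}; the short computation you give via the Newton identity $\sigma_1(\mu)^2=\sum_j\mu_j^2+2\sigma_2(\mu)$, together with the contradiction step $\mu_i\geq s>0\Rightarrow s^2\geq\mu_i^2+2\sigma_2(\mu)\geq s^2+2\sigma_2(\mu)$, is exactly the standard elementary proof found in that reference and in Elbert~\cite{Elbert:02}, so you have supplied what the paper omits and there is no substantive difference in approach to report.
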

	
	When $r\geq 2$, the following lemma establishes sufficient conditions to guarantee the ellipticity of $L_r$ (cf. Lemma 3.3 of~\cite{ac07}).
	
	\begin{lemma}\label{lemma:ellipticity of Lr}
		Let $\psi:\Sigma^n\rightarrow\mathcal{H}^{n+1}$ be a spacelike hypersurface in $\mathcal{H}^{n+1}$. If there exists an elliptic point of $\Sigma^n$, with respect to an appropriate choice of orientation $N$, and $H_{r+1}>0$ on $\Sigma^n$, for $2\leq r\leq n-1$, then for all $1\leq k\leq r$ the operator $L_k$ is elliptic or, equivalently, $P_k$ is positive definite (for an appropriate choice of orientation $N$, if $k$ is odd).
	\end{lemma}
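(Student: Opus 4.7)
The plan is to fix the orientation $N$ so that at the given elliptic point $p_0\in\Sigma^n$ every principal curvature $\lambda_i(p_0)$ is negative; this is possible because ``elliptic point'' means all $\lambda_i(p_0)$ have the same sign, and reversing $N$ changes that sign. With this choice, the tuple $\mu(p_0):=(-\lambda_1(p_0),\ldots,-\lambda_n(p_0))$ lies in the positive cone $\mathbb{R}^{n}_{+}$, and hence in the G\aa rding cone $\Gamma^{+}_{r+1}$, defined as the connected component of $\{\sigma_{r+1}>0\}\subset\mathbb{R}^n$ that contains $\mathbb{R}^{n}_{+}$. In particular $H_k(p_0)>0$ for every $1\leq k\leq n$.

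The first main step is to upgrade the positivity from $p_0$ to all of $\Sigma^n$. I would consider the assignment $p\mapsto\mu(p):=(-\lambda_1(p),\ldots,-\lambda_n(p))$, viewed as a continuous map from $\Sigma^n$ into $\mathbb{R}^n/S_n$. Since by hypothesis $H_{r+1}>0$ on $\Sigma^n$, the image of $\mu$ sits inside $\{\sigma_{r+1}>0\}$. As $\Sigma^n$ is connected, as $\mu$ is continuous, and as $\Gamma^{+}_{r+1}$ is a connected component of $\{\sigma_{r+1}>0\}$ (permutation-invariant, so descending well to the quotient), the anchor $\mu(p_0)\in\Gamma^{+}_{r+1}$ forces $\mu(p)\in\Gamma^{+}_{r+1}$ for every $p\in\Sigma^n$.

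Once we know $\mu(\Sigma^n)\subset\Gamma^{+}_{r+1}$, I would invoke the classical G\aa rding inequalities. The cone chain $\Gamma^{+}_{r+1}\subset\Gamma^{+}_{r}\subset\cdots\subset\Gamma^{+}_{1}$ gives $H_k>0$ on $\Sigma^n$ for every $1\leq k\leq r+1$, while the ``truncated'' G\aa rding inequalities yield $\sigma_k(\mu_1,\ldots,\widehat{\mu_i},\ldots,\mu_n)>0$ for every index $i$ and every $1\leq k\leq r$. By formula \eqref{spectrum of Pr} and the sign convention \eqref{Hr}, these are precisely the eigenvalues of $P_k$ at $p$. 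Therefore $P_k$ is positive definite, and equivalently $L_k$ is elliptic, for every $1\leq k\leq r$.

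The main obstacle is the middle step: propagating membership in $\Gamma^{+}_{r+1}$ from one point to all of $\Sigma^n$. Without the combined effect of (i) connectedness of $\Sigma^n$, (ii) the global sign of $H_{r+1}$, and (iii) the elliptic point acting as an anchor in the distinguished cone component, the continuous curve of eigenvalue tuples could in principle jump into another component of $\{\sigma_{r+1}>0\}$ on which several of the lower order $H_k$'s, and hence the $P_k$'s, would fail to be positive. Finally, the parenthetical ``appropriate choice of orientation $N$, if $k$ is odd'' merely records that reversing $N$ sends $A\mapsto -A$ and hence $P_k\mapsto(-1)^kP_k$, so the positive definiteness of $P_k$ for odd $k$ is tied to the orientation fixed in the initial step.
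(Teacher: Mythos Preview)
Your argument is correct. The G\aa rding cone approach you outline---fixing the orientation at the elliptic point so that $\mu(p_0)=(-\lambda_1(p_0),\ldots,-\lambda_n(p_0))$ lies in the positive cone, then using connectedness of $\Sigma^n$ together with the global hypothesis $H_{r+1}>0$ to trap $\mu(\Sigma^n)$ inside the component $\Gamma^{+}_{r+1}$, and finally reading off positivity of the eigenvalues of $P_k$ from the truncated G\aa rding inequalities---is the standard route to this kind of ellipticity result.

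Note, however, that the paper does not supply its own proof of this lemma: it is quoted verbatim as Lemma~3.3 of Al\'{\i}as--Colares~\cite{ac07} and simply cited. So there is no ``paper's proof'' to compare against here. Your write-up is in fact essentially the argument given in that reference (and, in a Riemannian variant, in Barbosa--Colares~\cite{Barbosa:97}), so you have reconstructed the intended proof. One small remark: in this paper an elliptic point is already \emph{defined} as a point where all principal curvatures are negative, so the orientation is fixed from the outset rather than chosen; your opening sentence can be trimmed accordingly.
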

	
	Here, by an elliptic point in a spacelike hypersurface $\psi:\Sigma^n\rightarrow\mathcal{H}^{n+1}$ we mean a point $p_0\in\Sigma^n$ where all principal curvatures $\lambda(p_0)$ are negative.
	
	The next lemma was done by Al\'{\i}as, Brasil Jr. and Colares~\cite{ABC:03} in a more general setting, when they studied spacelike hypersurfaces in conformally stationary spacetime (see Lemma $5.4$ of~\cite{ABC:03}). Taking into account our purposes, we rewrote it as follows.
	
	\begin{lemma}\label{lemma:existence of an elliptic point}
		Let $V$ be a complete closed conformal timelike vector field globally defined on the steady state space $\mathcal{H}^{n+1}$, and let $\Sigma^n$ be a complete spacelike hypersurface in $\mathcal{H}^{n+1}$. Suppose that the divergence of $V$ on $\mathcal{H}^{n+1}$, ${\rm Div}V$, does not vanish at a point of $\Sigma^n$ where the restriction $|V|_{\Sigma}=\sqrt{-\langle V,V\rangle}|_{\Sigma}$ of $|V|$ to $\Sigma^n$ attains a local minimum. Then, there exists an elliptic point $p_0\in\Sigma^n$.
	\end{lemma}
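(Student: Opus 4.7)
The plan is to localize the analysis at the hypothesized local minimum $p_0$ of $|V|_\Sigma$, and extract from the two-jet of $|V|_\Sigma^{2}$ at $p_0$ enough information to force the Weingarten operator $A_{p_0}$ to be definite. Because $V$ is closed conformal, $\overline{\nabla}_X V = \phi\, X$ for some $\phi\in C^{\infty}(\mathcal{H}^{n+1})$, and the classical identity $\mathrm{Div}\, V = (n+1)\phi$ translates the hypothesis $\mathrm{Div}\, V(p_0)\neq 0$ into $\phi(p_0)\neq 0$. This is the only property of $V$ beyond its being timelike that will enter the local argument at $p_0$.

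I would introduce the auxiliary function $f=\tfrac{1}{2}\langle V,V\rangle$ on $\mathcal{H}^{n+1}$, whose ambient gradient and Hessian are computed directly from $\overline{\nabla}_X V = \phi X$:
\[
\overline{\nabla} f \;=\; \phi\, V, \qquad \overline{\mathrm{Hess}}\, f(X,Y) \;=\; X(\phi)\,\langle V,Y\rangle \,+\, \phi^{2}\,\langle X,Y\rangle.
\]
Since $f|_\Sigma = -\tfrac{1}{2}\,|V|_\Sigma^{2}$, the point $p_0$ is a local \emph{maximum} of $f|_\Sigma$. The first-order condition then reads $\phi(p_0)\,V^{T}(p_0)=0$ and, combined with $\phi(p_0)\neq 0$, forces $V^{T}(p_0)=0$, i.e.\ $V(p_0)$ is parallel to $N(p_0)$. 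In particular $\langle V,Y\rangle$ vanishes at $p_0$ for every tangent $Y$, killing the first summand of $\overline{\mathrm{Hess}}\, f$ at $p_0$.

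The decisive step is to invoke the Gauss formula $\overline{\nabla}_X Y = \nabla_X Y - \langle AX,Y\rangle\,N$ (valid for the future-pointing timelike unit normal $N$) together with the standard restriction identity
\[
\mathrm{Hess}(f|_\Sigma)(X,Y) \;=\; \overline{\mathrm{Hess}}\, f(X,Y) \,-\, \phi\,\langle V,N\rangle\,\langle AX,Y\rangle.
\]
Evaluating at $p_0$ and imposing the second-order maximum condition $\mathrm{Hess}(f|_\Sigma)\leq 0$ would yield, for every nonzero tangent $X$ at $p_0$,
\[
\phi(p_0)\,\langle V,N\rangle(p_0)\,\langle A_{p_0}X,X\rangle \;\geq\; \phi(p_0)^{2}\,\langle X,X\rangle \;>\; 0.
\]
Thus $\langle A_{p_0}X,X\rangle$ has constant, nonzero sign on $T_{p_0}\Sigma\setminus\{0\}$, so $A_{p_0}$ is definite. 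If it is negative-definite, $p_0$ is already an elliptic point; if it is positive-definite, reversing the choice of $N$ replaces $A$ by $-A$ and produces the desired elliptic point.

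The main delicate issue is the sign bookkeeping inherited from the Lorentzian setting: the conventions $AX = -\overline{\nabla}_X N$, the factor $\langle N,N\rangle=-1$ (responsible for the minus sign in $h(X,Y) = -\langle AX,Y\rangle\,N$), and the inequality $\langle V,V\rangle<0$ all have to be threaded carefully so that the final inequality comes out in the right direction. Once this is done, the orientation ambiguity of $N$ cleanly absorbs the distinction between the two possible signs of $A_{p_0}$, and the existence of an elliptic point is immediate.
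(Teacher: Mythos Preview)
The paper does not supply its own proof of this lemma; it simply quotes it as Lemma~5.4 of Al\'{\i}as--Brasil--Colares~\cite{ABC:03}, rewritten in the steady state setting. Your argument is correct and is in fact the standard one that underlies that reference: exploit the closed conformal relation $\overline{\nabla}_X V=\phi X$ to compute the ambient gradient and Hessian of $f=\tfrac12\langle V,V\rangle$, observe that the local minimum of $|V|_\Sigma$ is a local maximum of $f|_\Sigma$, use the first-order condition together with $\phi(p_0)\neq 0$ to force $V(p_0)\parallel N(p_0)$, and then read off definiteness of $A_{p_0}$ from the second-order condition via the Hessian restriction formula. The only cosmetic point is that your final sentence about ``reversing the choice of $N$'' should be phrased in the language the paper uses: the notion of elliptic point in Lemma~\ref{lemma:ellipticity of Lr} is explicitly relative to ``an appropriate choice of orientation $N$'', so definiteness of $A_{p_0}$ (of either sign) is exactly what is needed there.
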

	
	We close this section quoting the maximum principle which will be used to obtain our rigidity and nonexistence results in the next section. Let $\Sigma^n$ be a complete Riemannian manifold and let $\mathcal P:\mathfrak{X}(\Sigma)\rightarrow\mathfrak{X}(\Sigma)$ denote a self-adjoint operator. Extending the idea of the linearized operator $L_{r}$ defined in \eqref{definition:Lr}, we consider a second order linear differential operator $\mathcal L:C^{\infty}(\Sigma)\rightarrow C^{\infty}(\Sigma)$ naturally associated to $\mathcal P$, given by
	
	\begin{equation}\label{operator L}
		\mathcal Lf=\,\text{tr}\,(\mathcal P\nabla^2f).
	\end{equation}
	
	From Lemma $4.2$ of \cite{Alias:12}, we have the following generalized maximum principle related to such an operator $\mathcal L$, which is an extension of the generalized maximum principle of Omori~\cite{Omori:67} and Yau~\cite{Yau:75} (see also~\cite{Alias-Mastrolia-Rigoli:16} for a modern and accessible reference to the generalized maximum principle of Omori-Yau).
	
	\begin{lemma}\label{lemma:Maximum Principle}
		Let $\Sigma^n$ be a complete Riemannian manifold with sectional curvature bounded from below, and $f\in C^{\infty}(\Sigma)$ be a function which is bounded from above on $\Sigma^n$. If $\mathcal P$ is positive semi-definite and ${\rm tr}(\mathcal P)$ is bounded from above on $\Sigma^n$, then there exists a sequence $(p_k)_{k\geq 1}$ in $\Sigma^n$ such that
		\begin{equation*}
			\lim_kf(p_k)=\sup_{\Sigma}f,\,\,\,\,\,\lim_k|\nabla f(p_k)|=0\,\,\,\,\,{\rm and}\,\,\,\,\limsup_k\mathcal Lf(p_k)\leq0,
		\end{equation*}
		where the operator $\mathcal L$ is given by \eqref{operator L}.
	\end{lemma}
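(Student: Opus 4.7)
The plan is to deduce this statement from the Hessian form of the Omori--Yau maximum principle. The classical Omori--Yau principle produces a sequence along which $f$ approaches its supremum, the gradient tends to zero, and the Laplacian is eventually almost nonpositive; however, since $\mathcal{L}$ contracts against the full Hessian $\nabla^2 f$ rather than against its trace, what is really needed here is the Hessian-valued refinement: under the hypothesis that $\Sigma^n$ is complete with sectional curvature bounded from below, one can extract a sequence $(p_k)_{k\geq 1}\subset\Sigma^n$ with
\begin{equation*}
f(p_k)\longrightarrow\sup_\Sigma f,\qquad |\nabla f(p_k)|\longrightarrow 0,\qquad \nabla^2 f(p_k)\leq \varepsilon_k\,I
\end{equation*}
for some sequence $\varepsilon_k\downarrow 0$, where the inequality is understood in the sense of self-adjoint endomorphisms of $T_{p_k}\Sigma$. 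This is the form proved via Omori's argument (exhausting $\Sigma$ by compacts, perturbing $f$ by $\varepsilon\,\phi$ for a suitable auxiliary function $\phi$ whose Hessian is controlled via Hessian comparison) and it is exactly where the lower bound on sectional curvature, rather than on the Ricci tensor alone, is used.

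Granted this Hessian version, the passage to $\mathcal{L}$ is a short algebraic manipulation. At each $p_k$ the symmetric endomorphism $\varepsilon_k I-\nabla^2 f(p_k)$ is positive semi-definite, and $\mathcal{P}_{p_k}$ is positive semi-definite by hypothesis. A standard fact about the trace of a product of two symmetric positive semi-definite operators (factor $\mathcal{P}=Q^\top Q$ with $Q=\sqrt{\mathcal{P}}$ and rewrite $\mathrm{tr}(\mathcal{P} C)=\mathrm{tr}(QCQ^\top)\geq 0$ whenever $C\succeq 0$) yields
\begin{equation*}
\mathrm{tr}\bigl(\mathcal{P}_{p_k}(\varepsilon_k I-\nabla^2 f(p_k))\bigr)\geq 0,
\end{equation*}
so that $\mathcal{L}f(p_k)=\mathrm{tr}(\mathcal{P}_{p_k}\nabla^2 f(p_k))\leq \varepsilon_k\,\mathrm{tr}(\mathcal{P})(p_k)$. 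Since $\mathrm{tr}(\mathcal{P})$ is bounded from above on $\Sigma^n$, say by a constant $C>0$, this gives $\mathcal{L}f(p_k)\leq C\varepsilon_k\to 0$, whence $\limsup_k \mathcal{L}f(p_k)\leq 0$, completing the argument.

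The only genuinely delicate point is the first paragraph: obtaining the Hessian-valued version of Omori--Yau from the assumption on sectional curvature. The ``gradient plus Laplacian'' statement familiar from Yau's paper is not strong enough, because $\mathcal{L}$ can in principle extract directional information from $\nabla^2 f$ that the trace does not see. One has to run Omori's original proof in full, building a local orthonormal frame at the approximate maxima of a perturbation $f-\varepsilon\phi$, and bounding the Hessian of $\phi$ in all directions by Hessian comparison against a model with curvature equal to the assumed lower bound. Once this is in hand, the contraction with $\mathcal{P}$ is essentially automatic. This is also the reason why the hypothesis cannot be weakened from sectional to Ricci curvature bounded below without further assumptions on $\mathcal{P}$.
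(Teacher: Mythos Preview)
Your argument is correct and is exactly the standard route to this statement: apply the Hessian version of the Omori--Yau principle (valid because the sectional curvature is bounded from below), then contract the resulting two-sided Hessian bound against the positive semi-definite operator $\mathcal P$, using that ${\rm tr}(\mathcal P)$ is bounded above to control the error term $\varepsilon_k\,{\rm tr}(\mathcal P)(p_k)$.

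Note, however, that the paper does \emph{not} supply its own proof of this lemma. It is simply quoted as Lemma~4.2 of Al\'{\i}as, Impera and Rigoli~\cite{Alias:12}, with no argument given. So there is no ``paper's proof'' to compare yours against; what you have written is precisely the reasoning that underlies the cited result. Your discussion of why sectional (rather than Ricci) curvature is needed, and of why the Laplacian version of Omori--Yau is insufficient for a general trace-type operator $\mathcal L$, is accurate and to the point.
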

	
	\section{Rigidity and nonexistence results in $\mathcal{H}^{n+1}$}\label{sec:main results}
	
	As introduced before, the steady state space $\mathcal{H}^{n+1}$ is the hyperquadric
	$$\mathcal{H}^{n+1}=\{x\in\mathbb{S}^{n+1}_{1};\langle x,a\rangle>0\},$$
	with $a\in\mathbb{L}^{n+2}$ be a past-pointing null vector, that is, $\langle a,a\rangle=0$ and $\langle a,e_{n+2}\rangle>0,$ where $e_{n+2}=(0,\ldots,0,1)$.
	
	According to Example 2 in Section 4 of \cite{sm99}, the timelike vector field
	$$V=\langle x,a\rangle x+a$$
	is such that
	\begin{equation}\label{campo-conforme}
		\overline{\nabla}_{W}V=\langle x,a\rangle W,
	\end{equation}
	for all $W\in\mathfrak{X}(\mathcal{H}^{n+1})$, where $\overline{\nabla}$ stands for the Levi-Civita connection of $\mathcal{H}^{n+1}$. Thus, $V$ is a complete closed conformal timelike vector field globally defined on $\mathcal{H}^{n+1}$. Proposition 1 of \cite{sm99} guarantees that the $n$-dimensional distribution $\mathcal{D}(V)$ defined on $\mathcal{H}^{n+1}$ by
	$$x\in\mathcal{H}^{n+1}\longmapsto\mathcal{D}(x)=\{v\in T_{x}\mathcal{H}^{n+1}:\langle V(x),v\rangle=0\}$$
	determines a codimension one spacelike foliation $\mathcal{F}(V)$ which is oriented by $V$. Furthermore, the leaves of $\mathcal{D}(V)$ are given by
	$$\mathcal{E}_{\tau}=\{x\in\mathcal{H}^{n+1}:\langle x,a\rangle=\tau\},\,\,\,{\rm with}\,\,\,\tau>0,$$
	which are totally umbilical hypersurfaces of $\mathcal{H}^{n+1}$ isometric to the $n$-dimensional Euclidean space $\mathbb{R}^{n}$, having constant $r$-th mean curvature $H_r=1$ (with respect to the unit normal fields $N_{\tau}=x-\dfrac{1}{\tau}a,\;\;\;x\in\mathcal{L}_{\tau}$, when $r$ is odd).
	
	\begin{center}
		\centering
		\includegraphics[height=5cm]{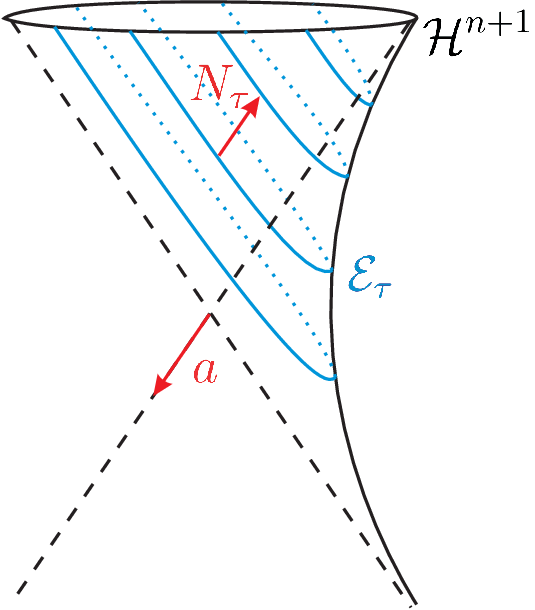} \\
		Figure 1: Foliating $\mathcal{H}^{n+1}$ via spacelike hyperplanes $\mathcal{E}_{\tau}$.
	\end{center}
	
	In this setting, we will consider two particular functions naturally attached to a spacelike hypersurface $\Sigma^{n}$ immersed on $\mathcal{H}^{n+1}$, namely, the height and angle functions with respect to a previously fixed nonzero null vector $a\in\mathbb{L}^{n+2}$, which are defined, respectively, by $l_{a}=\langle \psi,a \rangle$ and $f_{a}=\langle N,a\rangle$. It is not difficult to check that $\nabla l_{a}=a^{\top}$ and $\nabla f_{a}=-A(a^{\top}),$ where $a^{\top}$ stands for the orthogonal projection of $a$ onto the tangent bundle $T\Sigma$. Moreover, using Gauss and Weingarten formulas, we obtain
	\begin{equation}\label{Hessian of la}
		\nabla_{X}\nabla l_{a}=-f_{a}AX-l_{a}X,
	\end{equation}
	for all $X\in\mathfrak{X}(\Sigma)$. From Lemma~\ref{lemma:traces formulas} jointly with \eqref{Hessian of la}, we can deduce the formula for the operator $L_{r}$ acting on the height function, that is,
	\begin{equation}\label{Lrla}
		L_{r}l_{a}=c_{r}(-l_{a}H_{r}+f_{a}H_{r+1}),
	\end{equation}
	where $c_r=(n-r){n\choose r}$.
	
	In what follows, we say that a spacelike hypersurface $\Sigma^n$ immersed in $\mathcal{H}^{n+1}$ is contained in the closure of the interior domain enclosed by a spacelike hyperplane $\mathcal E_\tau$, with $\tau>0$, when its height function satisfies $l_a\leq\tau$ (see Figure 2).
	
	\begin{center}
		\centering
		\includegraphics[height=5cm]{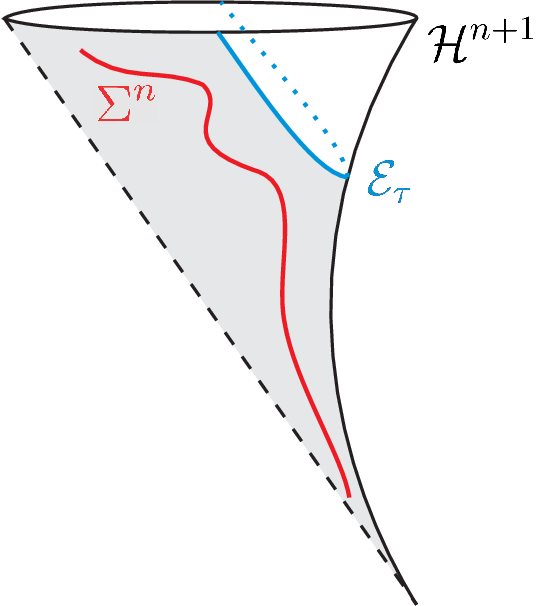} \\
		Figure 2: $\Sigma^n$ is contained in the closure of the interior domain enclosed by $\mathcal E_\tau$.
	\end{center}
	
	Now, we are in position to state and prove our first rigidity result for complete spacelike hypersurfaces $\psi:\Sigma^{n}\rightarrow\mathcal{H}^{n+1}$. Fixed a past-pointing nonzero null vector $a\in\mathbb L^{n+2}$ as before and taking a future-pointing orientation $N$ for such a $\psi$, along this paper we will assume that its angle function $f_{a}$ is positive.
	
	\begin{theorem}\label{thm1-H}
		Let $\psi:\Sigma^{n}\rightarrow\mathcal{H}^{n+1}$ be a complete spacelike hypersurface of $\mathcal{H}^{n+1}$ contained in the closure of the interior domain enclosed by a spacelike hyperplane $\mathcal E_{\tau}$ orthogonal to a nonzero null vector $a\in\mathbb{L}^{n+2}$. Suppose that the mean curvature $H$ of $\Sigma^{n}$ is positive, bounded and satisfies
		\begin{equation}\label{hipot1-teo1}
			H\leq H_2.
		\end{equation}
		If
		\begin{equation}\label{hipot2-teo1}
			|a^\top| \leq C \inf_{\Sigma} (H_2 - H),
		\end{equation}
		for some positive constant $C$, then $\Sigma^{n}$ is a spacelike hyperplane $\mathcal E_{\tilde{\tau}}$ with $\tilde{\tau}\leq\tau$.
	\end{theorem}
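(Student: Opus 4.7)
The plan is to invoke the generalized maximum principle, Lemma~\ref{lemma:Maximum Principle}, for the height function $l_a$ with operator $L_1$, use the formula \eqref{Lrla} to force $\inf_\Sigma(H_2-H)=0$, and then apply hypothesis \eqref{hipot2-teo1} to conclude that $l_a$ is constant.

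First I would record two preparatory facts. Decomposing $a$ in the orthogonal frame $\{T_p\Sigma,N,x\}$ of $\mathbb{L}^{n+2}$ and using that $a$ is null together with $\langle x,x\rangle=1$ and $\langle N,N\rangle=-1$, one obtains
\begin{equation*}
a=a^\top-f_a N+l_a x,\qquad |a^\top|^2=f_a^2-l_a^2,
\end{equation*}
so $f_a\geq l_a>0$ and $|a^\top|(p_k)\to 0$ implies $f_a(p_k)-l_a(p_k)\to 0$. Next, $H\leq H_2$ together with Newton's inequality $H_2\leq H_1^2=H^2$ gives boundedness of $H_2$, while Lemma~\ref{lemma:positiveness of P1} shows $P_1$ is positive definite. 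In a frame diagonalizing $A$, the eigenvalues $\lambda_i+nH$ of $P_1$ are positive, so (with $H$ bounded) all principal curvatures $\lambda_i$ are uniformly bounded; the Gauss equation $K_\Sigma(e_i\wedge e_j)=1+\lambda_i\lambda_j$ then bounds the sectional curvatures from below. Since also $\mathrm{tr}(P_1)=n(n-1)H$ is bounded and $l_a\leq\tau$, all hypotheses of Lemma~\ref{lemma:Maximum Principle} with $\mathcal{P}=P_1$ are satisfied.

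The maximum principle then yields a sequence $(p_k)\subset\Sigma^n$ with $l_a(p_k)\to L:=\sup_\Sigma l_a>0$, $|\nabla l_a|(p_k)=|a^\top|(p_k)\to 0$, and $\limsup_k L_1 l_a(p_k)\leq 0$. Rewriting \eqref{Lrla} as
\begin{equation*}
L_1 l_a = n(n-1)\bigl[(f_a-l_a)H_2+l_a(H_2-H)\bigr],
\end{equation*}
boundedness of $H_2$ together with $f_a-l_a\to 0$ kills the first bracket along $(p_k)$, so $\limsup_k l_a(p_k)(H_2-H)(p_k)\leq 0$; since this sequence is nonnegative (as $l_a>0$ and $H_2\geq H$), it must converge to $0$, and $l_a(p_k)\to L>0$ then forces $(H_2-H)(p_k)\to 0$. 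Thus $\inf_\Sigma(H_2-H)=0$, and \eqref{hipot2-teo1} gives $|a^\top|\equiv 0$. Consequently $l_a$ is a constant $\tilde\tau\leq\tau$ on the connected $\Sigma^n$, so $\psi(\Sigma)\subset\mathcal{E}_{\tilde\tau}$; since $\psi$ is then a local isometry from a complete connected manifold onto the simply connected $\mathcal{E}_{\tilde\tau}\cong\mathbb{R}^n$, it is a global isometry, giving $\Sigma^n=\mathcal{E}_{\tilde\tau}$. The delicate technical point is the verification of the curvature and trace bounds demanded by Lemma~\ref{lemma:Maximum Principle}; this rests on combining the ellipticity of $L_1$ with the Newton a priori bound $H_2\leq H^2$.
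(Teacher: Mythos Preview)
Your proof is correct and follows essentially the same route as the paper's: verify the hypotheses of Lemma~\ref{lemma:Maximum Principle} for $\mathcal P=P_1$ and $f=l_a$, use \eqref{Lrla} to force $\inf_\Sigma(H_2-H)=0$, and conclude via \eqref{hipot2-teo1}. Two minor remarks: the Gauss equation here reads $K_\Sigma(e_i,e_j)=1-\lambda_i\lambda_j$ (not $+$), and the paper obtains the key estimate more directly by using $f_a\geq l_a$ and $H_2>0$ to write $L_1 l_a\geq n(n-1)(H_2-H)l_a$, avoiding the need to track the $(f_a-l_a)H_2$ term separately along the sequence.
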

	
	\begin{proof}
		Regarding that $a^{\top}$ stands for the orthogonal projection of $a$ onto the tangent bundle $T\Sigma$, we have that $a^{\top}=a+f_{a}N-l_{a}\psi$. Consequently,
		\begin{equation}\label{normaa_0}
			|a^{\top}|^{2}=f_{a}^{2}-l_{a}^{2}.
		\end{equation}
		In particular, we obtain that $f_{a}\geq l_{a}>0$. From (\ref{Lrla}) we can see that
		\begin{equation}\label{eqaux:1_0}
			L_1(\,l_a\,)=-c_{r}Hl_{a}+c_{r}H_{2}f_{a}\geq c_{1}( H_2 - H)l_a,
		\end{equation}
		where $c_1=n(n-1)$.
		
		From Cauchy-Schwarz inequality we have that $H_2 \leq H^2$ and, since we are assuming that $H$ is bounded, we get that $H_2$ is also bounded. Thus, taking into the algebraic relation
		\begin{equation}\label{norm of the shape operator}
			\sum_{i=1}^n\lambda_i^2=|A|^2=n^2H^2-n(n-1)H_2,
		\end{equation}
		we have that all the principal curvatures $\lambda_i$ of $\Sigma^n$ are bounded. Consequently, from Gauss equation
		\begin{equation}\label{Gauss equation}
			K_{\Sigma}(e_i,e_j)=1 - \lambda_{i}\lambda_{j},
		\end{equation}
		we conclude that the sectional curvature $K_{\Sigma}$ of $\Sigma^n$ is bounded from below.
		
		We note that our assumption that $\Sigma^n$ is contained in the closure of the interior domain enclosed by $\mathcal E_{\tau}$ guarantees that $l_a$ is bounded. On the other hand, using hypothesis \eqref{hipot1-teo1}, it follows from Lemma $3.10$ of~\cite{Elbert:02} that $L_1$ is elliptic and, in particular, $P_1$ is positive semi-definite and ${\rm tr}(P_1) = c_1 H$ is bounded.
		
		Hence, since \eqref{eqaux:1_0} gives
		\begin{equation*}
			L_1(\,l_a\,)\geq n(n-1)\left(H_2-H\right)l_a\geq0.
		\end{equation*}
		we can apply Lemma~\ref{lemma:Maximum Principle} to obtain a sequence $(p_k)_{k\geq 1}$ in $\Sigma^n$ such that
		\begin{equation}\label{eq.3BB}
			\lim_kl_a(p_k)=\sup_{\Sigma}l_a,\,\,\,\,\,\lim_k|\nabla l_a(p_k)|=0\,\,\,\,\,{\rm and}\,\,\,\,\limsup_k L_1(\,l_a\,)(p_k)\leq0.
		\end{equation}
		Consequently, from \eqref{eqaux:1_0} and \eqref{eq.3BB} we have that
		\begin{equation}\label{eq.3CC}
			0\geq\limsup_k L_1(\,l_a\,)(p_k)\geq n(n-1) (\sup_{\Sigma}l_a)\limsup_k\left(H_2 -H \right)(p_k)\geq0.
		\end{equation}
		Thus, since $\sup_{\Sigma}l_a>0$, from \eqref{eq.3CC} we get that
		$$\limsup_k\left(H_2 -H \right)(p_k)=0$$
		and, in particular,
		\begin{equation}\label{eqaux:9}
			\inf_{\Sigma}\left(H_2 -H\right)=0.
		\end{equation}
		Therefore, hypothesis \eqref{hipot2-teo1} jointly with \eqref{eqaux:9} guarantee that $a^\top=\nabla l_a$ vanishes identically on $\Sigma^n$, that is, $l_a$ is constant on $\Sigma^n$, which implies that $\Sigma^n$ is a spacelike hyperplane $\mathcal E_{\tilde{\tau}}$ with $\tilde{\tau} \leq \tau$.
	\end{proof}
	
	Taking into account that $H_2 = 1 - R$, where $R$ is the normalized scalar curvature of the hypersurface, from Theorem \eqref{thm1-H} we obtain the following consequence:
	
	\begin{corollary}\label{crl1}
		Let $\psi:\Sigma^{n}\rightarrow\mathcal{H}^{n+1}$ be a complete spacelike hypersurface of $\mathcal{H}^{n+1}$ contained in the closure of the interior domain enclosed by a hyperplane $\mathcal E_{\tau}$ orthogonal to a nonzero null vector $a\in\mathbb{L}^{n+2}$. Suppose that the mean curvature $H$ of $\Sigma^{n}$ is positive, bounded and satisfies
		\begin{equation*}
			H+R\leq1,
		\end{equation*}
		where $R$ is the normalized scalar curvature of $\Sigma^n$. If
		\begin{equation*}\label{hipot2-cor1}
			|a^\top|\leq C\{1-\sup_\Sigma(H+R)\},
		\end{equation*}
		for some positive constant $C$, then $\Sigma^{n}$ is a spacelike hyperplane $\mathcal E_{\tilde{\tau}}$ with $\tilde{\tau} \leq \tau$.
	\end{corollary}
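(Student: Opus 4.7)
The plan is to deduce this corollary directly from Theorem~\ref{thm1-H} by recognizing that the hypotheses are nothing more than a rewriting of those of the theorem once one observes the identity $H_2=1-R$ on a spacelike hypersurface of $\mathcal{H}^{n+1}$. So essentially no new analytic work is needed; the whole task is to verify that identity and then translate the bounds.

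First I would establish the relation $H_2 = 1-R$. Since $\mathcal{H}^{n+1}$ is an open subset of $\mathbb{S}^{n+1}_1$ and therefore has constant sectional curvature $1$, the Gauss equation recorded in \eqref{Gauss equation} gives $K_{\Sigma}(e_i,e_j)=1-\lambda_i\lambda_j$ for any orthonormal frame $\{e_i\}$ diagonalizing $A$. Tracing this, the scalar curvature is
\begin{equation*}
\mathrm{Scal}_\Sigma=\sum_{i\neq j}(1-\lambda_i\lambda_j)=n(n-1)-2\sigma_2(\lambda_1,\ldots,\lambda_n)=n(n-1)-2S_2.
\end{equation*}
By the definition \eqref{Hr}, $\binom{n}{2}H_2=S_2$, so dividing through by $n(n-1)$ yields $R=1-H_2$, as claimed.

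Next I would translate the two hypotheses of the corollary into the hypotheses of Theorem~\ref{thm1-H}. The assumption $H+R\leq 1$ is, by the identity just proved, exactly $H\leq H_2$, which is \eqref{hipot1-teo1}. Likewise, $1-\sup_{\Sigma}(H+R)=\inf_{\Sigma}(1-H-R)=\inf_{\Sigma}(H_2-H)$, so the gradient bound $|a^\top|\leq C\{1-\sup_{\Sigma}(H+R)\}$ is precisely \eqref{hipot2-teo1}. With $H$ positive and bounded, and $\Sigma^n$ contained in the closure of the interior domain enclosed by $\mathcal{E}_\tau$, all the hypotheses of Theorem~\ref{thm1-H} are satisfied, so the conclusion $\Sigma^n=\mathcal{E}_{\tilde{\tau}}$ with $\tilde{\tau}\leq\tau$ follows at once.

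There is no real obstacle here; the only place one must be a bit careful is confirming the sign conventions in the Gauss equation so that $R=1-H_2$ (and not, for instance, $R=1+H_2$); the convention used in the paper is fixed by \eqref{Gauss equation} and is consistent with $\binom{n}{2}H_2=S_2=\sigma_2(\lambda_1,\ldots,\lambda_n)$ from \eqref{Hr}, so the identity goes through as written.
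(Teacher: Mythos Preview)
Your proposal is correct and follows exactly the paper's own approach: the paper simply states ``Taking into account that $H_2=1-R$, where $R$ is the normalized scalar curvature of the hypersurface, from Theorem~\ref{thm1-H} we obtain the following consequence,'' and your argument supplies precisely the verification of that identity and the translation of the hypotheses into \eqref{hipot1-teo1} and \eqref{hipot2-teo1}.
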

	
	In the next result, we consider the rigidity via suitable constraints on the higher order mean curvatures.
	
	\begin{theorem}\label{thm2-H}
		Let $\psi:\Sigma^{n}\rightarrow\mathcal{H}^{n+1}$ be a complete spacelike hypersurface of $\mathcal{H}^{n+1}$ contained in the closure of the interior domain enclosed by a spacelike hyperplane $\mathcal E_{\tau}$ orthogonal to a nonzero null vector $a\in\mathbb{L}^{n+2}$, with sectional curvature $K_{\Sigma}\leq1$ and bounded from below. Suppose that, for some $1\leq r\leq n-1$, $H_{r+1}$ is bounded and satisfies
		\begin{equation}\label{hipot1-teo2}
			\beta\leq H_{r}\leq H_{r+1},
		\end{equation}
		where $\beta$ is a positive constant. If
		\begin{equation}\label{hipot2-teo2}
			|a^\top| \leq C \inf_{\Sigma} (H_{r+1} - H_r),
		\end{equation}
		for some positive constant $C$, then $\Sigma^{n}$ is a spacelike hyperplane $\mathcal E_{\tilde{\tau}}$ with $\tilde{\tau} \leq \tau$.
	\end{theorem}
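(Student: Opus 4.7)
The plan is to mirror the proof of Theorem~\ref{thm1-H}, replacing the operator $L_1$ by $L_r$ and the pair $(H,H_2)$ by $(H_r,H_{r+1})$. From \eqref{normaa_0} and the standing conventions $f_a>0$, $l_a>0$, one has $f_a\geq l_a>0$, so combining \eqref{Lrla} with the inequality $H_r\leq H_{r+1}$ from \eqref{hipot1-teo2} yields
\begin{equation*}
L_r l_a \;=\; c_r(-l_a H_r + f_a H_{r+1}) \;\geq\; c_r\,l_a\,(H_{r+1}-H_r) \;\geq\; 0.
\end{equation*}

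The goal is then to apply Lemma~\ref{lemma:Maximum Principle} with $\mathcal{P}=P_r$, $\mathcal{L}=L_r$, and test function $l_a$. For this I would verify: (i) $K_\Sigma$ is bounded below, which is assumed; (ii) $l_a\leq\tau$ is bounded above, by the containment in the interior domain of $\mathcal{E}_\tau$; (iii) $\mathrm{tr}(P_r)=c_r H_r\leq c_r H_{r+1}$ is bounded above, since $H_{r+1}$ is bounded; and (iv) $P_r$ is positive semi-definite. Item (iv) is the delicate point, and is where the hypothesis $K_\Sigma\leq 1$ enters. By the Gauss equation \eqref{Gauss equation}, $K_\Sigma\leq 1$ is equivalent to $\lambda_i\lambda_j\geq 0$ for every pair $(i,j)$, i.e., all principal curvatures share a common sign at each point. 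At a hypothetical point where all $\lambda_i\geq 0$, the identity $\binom{n}{k}H_k=\sigma_k(-\lambda_1,\ldots,-\lambda_n)$ from \eqref{Hr} forces $H_k$ to have the sign of $(-1)^k$; since exactly one of $r,\,r+1$ is odd, this contradicts $\beta\leq H_r\leq H_{r+1}$. Hence $\lambda_i\leq 0$ at every point of $\Sigma^n$, and then \eqref{spectrum of Pr} exhibits each eigenvalue of $P_r$ as a nonnegative sum of products of the numbers $-\lambda_i\geq 0$, so $P_r\geq 0$.

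With (i)--(iv) in hand, Lemma~\ref{lemma:Maximum Principle} produces a sequence $(p_k)\subset\Sigma^n$ with $l_a(p_k)\to\sup_\Sigma l_a$, $|\nabla l_a(p_k)|\to 0$, and $\limsup_k L_r l_a(p_k)\leq 0$. Combined with the inequality displayed above and $\sup_\Sigma l_a>0$, this gives
\begin{equation*}
0 \;\geq\; \limsup_k L_r l_a(p_k) \;\geq\; c_r(\sup_\Sigma l_a)\limsup_k(H_{r+1}-H_r)(p_k)\;\geq\;0,
\end{equation*}
so, using $H_{r+1}\geq H_r$, we conclude $\inf_\Sigma(H_{r+1}-H_r)=0$. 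Hypothesis \eqref{hipot2-teo2} then forces $|a^\top|\equiv 0$, hence $\nabla l_a\equiv 0$; thus $l_a$ is constant on $\Sigma^n$ and $\Sigma^n=\mathcal{E}_{\tilde\tau}$ for some $\tilde\tau\leq\tau$. The main obstacle is precisely item (iv): the sign analysis built on $K_\Sigma\leq 1$ together with the simultaneous positivity of $H_r$ and $H_{r+1}$ is essential, as it delivers the positive semi-definiteness of $P_r$ directly and bypasses the need to produce an elliptic point through Lemma~\ref{lemma:ellipticity of Lr}.
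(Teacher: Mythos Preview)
Your proof is correct and follows the same overall architecture as the paper: derive $L_r l_a\geq c_r(H_{r+1}-H_r)l_a\geq 0$, invoke Lemma~\ref{lemma:Maximum Principle}, deduce $\inf_\Sigma(H_{r+1}-H_r)=0$, and finish via \eqref{hipot2-teo2}. The one genuine difference lies in how you secure the positive semi-definiteness needed for Lemma~\ref{lemma:Maximum Principle}. The paper does \emph{not} apply the lemma to $P_r$; instead it introduces the auxiliary operator $\mathcal P_r:=H_rP_r$ and observes that its eigenvalues are sums of terms of the form $(\lambda_{i_1}\lambda_{j_1})\cdots(\lambda_{i_r}\lambda_{j_r})$, each factor being nonnegative by $K_\Sigma\leq 1$ (and trivially when an index repeats). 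The lower bound $H_r\geq\beta$ is then used to pass from $\mathcal L_r l_a\geq c_r\beta(H_{r+1}-H_r)l_a$ back to the desired conclusion. Your route is more direct: from $\lambda_i\lambda_j\geq 0$ together with the simultaneous positivity of $H_r$ and $H_{r+1}$ you show every principal curvature is nonpositive, so $P_r$ itself is positive semi-definite and no auxiliary operator is needed. This is arguably cleaner and yields the extra geometric information that every point of $\Sigma^n$ is weakly elliptic; the paper's pairing trick, on the other hand, avoids the parity case-split and would still give $\mathcal P_r\geq 0$ even without knowing the common sign of the $\lambda_i$.
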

	
	\begin{proof}
		Since \eqref{normaa_0} guarantees that $f_a\geq l_a$, from (\ref{Lrla}) we get that
		\begin{equation}\label{eqaux:1}
			L_r(\,l_a\,)=-c_{r}H_{r}l_{a}+c_{r}H_{r+1}f_{a}\geq c_{r}( H_{r+1} - H_r)l_a,
		\end{equation}
		where $c_r=(n-r){n\choose r}$.
		
		We define on $\Sigma^n$ the following self-adjoint operator $\mathcal{P}_r : \mathfrak{X}(\Sigma^n) \longrightarrow \mathfrak{X}(\Sigma^n)$ by
		\begin{equation}\label{Pfresco}
			\mathcal{P}_r := H_r P_r.
		\end{equation}
		Taking a (local) orthonormal frame $\{e_{1},\ldots,e_{n}\}$ such that $Ae_{i}=\lambda_{i}e_{i}$, from \eqref{Hr} and \eqref{spectrum of Pr} we have that
		\begin{eqnarray}\label{eqaux:2}
			\langle\mathcal P_re_{i},e_{i}\rangle={n\choose r}^{-1}\sum_{i_1<\ldots<i_r,i_j\neq i,j_1<\ldots<j_r}(\lambda_{i_{1}}\lambda_{j_{1}})\ldots(\lambda_{i_{r}}\lambda_{j_{r}}).
		\end{eqnarray}
		But, since we are assuming that $K_{\Sigma}\leq1$, from Gauss equation we obtain
		\begin{equation}\label{curvaturas}
			\lambda_i \lambda_j = 1 - K_{\Sigma}(e_i, e_j) \geq 0 ,
		\end{equation}
		for all $i,j \in \{ 1, \ldots, n \}$, with $i \neq j$. Thus, from \eqref{eqaux:2} and \eqref{curvaturas} we get
		\begin{equation*}\label{positivo-definido}
			\langle \mathcal{P}_r e_i, e_i \rangle \geq 0.
		\end{equation*}
		Consequently, $\mathcal P_r$ is positive semi-definite. In addition, since we are also assuming that $H_r$ is bounded on $\Sigma^{n}$, we have that the same happens for ${\rm tr}(\mathcal P_r)=c_{r}H_{r}^{2}$.
		
		Extending the idea of the proof of Theorem~\ref{thm2-H}, we consider the operator $\mathcal L_r:C^{\infty}(\Sigma)\rightarrow C^{\infty}(\Sigma)$ given by
		\begin{equation}\label{eq:1-10}
			\mathcal L_rf={\rm tr}(\mathcal P_r\nabla^2f).
		\end{equation}
		Since $\mathcal P_r$ is positive semi-definite, from \eqref{hipot1-teo2}, \eqref{eqaux:1} and \eqref{eq:1-10} we get
		\begin{equation}\label{eq.3}
			\mathcal L_r(\,l_a\,)\geq c_r\left(H_{r+1} - H_r\right)l_a H_r \geq0.
		\end{equation}
		So, taking into account that our assumption that $\Sigma^n$ is contained in the closure of the interior domain enclosed by $\mathcal E_{\tau}$ implies that $l_a$ is bounded, we can apply Lemma~\ref{lemma:Maximum Principle} to obtain a sequence $(p_k)_{k\geq 1}$ in $\Sigma^n$ such that
		\begin{equation}\label{eq.3B}
			\lim_kl_a(p_k)=\sup_{\Sigma}l_a,\,\,\,\,\,\lim_k|\nabla l_a(p_k)|=0\,\,\,\,\,{\rm and}\,\,\,\,\limsup_k\mathcal L_r(\,l_a\,)(p_k)\leq0.
		\end{equation}
		Consequently, from \eqref{eq.3} and \eqref{eq.3B} we have that
		\begin{equation}\label{eq.3C}
			0\geq\limsup_k\mathcal L_r(\,l_a\,)(p_k)\geq c_r \beta (\sup_{\Sigma}l_a) \limsup_k\left(H_{r+1}-H_r\right)(p_k)\geq0.
		\end{equation}
		Hence, since $\displaystyle\sup_{\Sigma}l_a>0$, from \eqref{eq.3C} we get that
		$$\limsup_k\left(H_{r+1}-H_r \right)(p_k)=0$$
		and, in particular,
		\begin{equation}\label{eqaux:10}
			\inf_{\Sigma}\left(H_{r+1}-H_r\right)=0.
		\end{equation}
		Therefore, hypothesis \eqref{hipot2-teo2} guarantees that $\nabla l_a=a^\top$ vanishes identically on $\Sigma^n$, that is, $l_a$ is constant on $\Sigma^n$, which implies that $\Sigma^n$ is a spacelike hyperplane $\mathcal E_{\tilde{\tau}}$ with $\tilde{\tau} \leq \tau$.
	\end{proof}
	
	From the proof of Theorem \eqref{thm2-H}, we obtain the following nonexistence result:
	
	\begin{corollary}\label{nonexistence:1}
		There do not exist complete spacelike hypersurfaces $\psi:\Sigma^{n}\rightarrow\mathcal{H}^{n+1}$ contained in the closure of the interior domain enclosed by a spacelike hyperplane $\mathcal E_{\tau}$ orthogonal to a nonzero null vector $a\in\mathbb{L}^{n+2}$, with sectional curvature $K_{\Sigma}\leq1$ and bounded from below such that, for some $1\leq r\leq n-1$, $H_r$ and $H_{r+1}$ are positive constant satisfying $H_{r}<H_{r+1}$.
	\end{corollary}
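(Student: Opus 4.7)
The plan is to proceed by contradiction, repurposing the intermediate steps of the proof of Theorem~\ref{thm2-H} rather than invoking its full conclusion. Assume such a spacelike hypersurface $\psi:\Sigma^{n}\to\mathcal H^{n+1}$ exists. Then, with $H_r$ and $H_{r+1}$ both positive constants satisfying $0<H_r<H_{r+1}$, the number $H_{r+1}-H_r$ is a strictly positive constant, so in particular
\[
\inf_{\Sigma}(H_{r+1}-H_r) \;=\; H_{r+1}-H_r \;>\;0.
\]
The goal is to show that the machinery of Theorem~\ref{thm2-H} forces this infimum to be zero, giving the contradiction.

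First I would verify that Lemma~\ref{lemma:Maximum Principle} applies to $l_a$ with the self-adjoint operator $\mathcal P_r=H_rP_r$ and its associated second-order operator $\mathcal L_r$ defined as in~\eqref{eq:1-10}. The sectional curvature $K_\Sigma$ is assumed bounded from below; the height function $l_a$ is bounded from above since $\Sigma^n$ lies in the closure of the interior domain enclosed by $\mathcal E_\tau$; the assumption $K_\Sigma\leq 1$ and the computation in~\eqref{eqaux:2}--\eqref{curvaturas} give that $\mathcal P_r$ is positive semi-definite; and $\mathrm{tr}(\mathcal P_r)=c_rH_r^{2}$ is automatically bounded because $H_r$ is constant. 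Thus no appeal to a hypothesis of the form~\eqref{hipot2-teo2} is needed, which is the point: only the intermediate step up to~\eqref{eqaux:10} is required.

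Next I would combine the pointwise inequality
\[
\mathcal L_r(l_a) \;\geq\; c_r\,H_r\,(H_{r+1}-H_r)\,l_a,
\]
which is exactly~\eqref{eq.3} with $\beta$ replaced by the constant $H_r$, with the output of the maximum principle: there is a sequence $(p_k)\subset\Sigma^n$ with $l_a(p_k)\to\sup_\Sigma l_a$ and $\limsup_k\mathcal L_r(l_a)(p_k)\leq 0$. Taking the limsup in the displayed inequality yields
\[
0 \;\geq\; c_r\,H_r\,(H_{r+1}-H_r)\,\sup_{\Sigma}l_a.
\]
Since $l_a=\langle\psi,a\rangle>0$ on $\mathcal H^{n+1}$, one has $\sup_\Sigma l_a>0$; meanwhile $c_r>0$, $H_r>0$, and $H_{r+1}-H_r>0$ are all strictly positive constants. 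The right-hand side is therefore strictly positive, contradicting the nonpositive bound on the left.

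I expect the argument to be essentially painless, the only subtlety being to make explicit that \eqref{hipot2-teo2} is not invoked — the contradiction comes directly from the incompatibility of constancy of $H_r<H_{r+1}$ with the relation~\eqref{eqaux:10} that the maximum principle forces. No extra bound on $|a^{\top}|$ is needed precisely because the inequality $\inf_\Sigma(H_{r+1}-H_r)=0$ already fails from the constancy hypothesis, so the conclusion of Theorem~\ref{thm2-H} (namely $\Sigma^n=\mathcal E_{\tilde\tau}$) cannot even be reached; the hypothesized hypersurface simply does not exist.
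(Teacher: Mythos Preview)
Your proposal is correct and follows essentially the same approach as the paper: both proceed by contradiction and extract from the proof of Theorem~\ref{thm2-H} the conclusion that the generalized maximum principle forces $\inf_\Sigma(H_{r+1}-H_r)=0$ (equivalently, your inequality $0\geq c_rH_r(H_{r+1}-H_r)\sup_\Sigma l_a$), which is incompatible with $H_r$ and $H_{r+1}$ being constants with $H_r<H_{r+1}$. You have simply made explicit the verification of the hypotheses of Lemma~\ref{lemma:Maximum Principle} that the paper leaves implicit by citing the proof of Theorem~\ref{thm2-H}.
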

	
	\begin{proof}
		Suppose by contradiction that there is such a spacelike hypersurface $\Sigma^n$. But, from the proof of Theorem \eqref{thm2-H} we obtain that
		\begin{equation}
			\inf_{\Sigma}\left(H_{r+1}-H_r\right)=0,
		\end{equation}
		which implies that $H_r = H_{r+1}$, contradicting our hypothesis that $H_{r+1} > H_r$.
	\end{proof}	
	
	Before presenting our next result, we will need to establish the following definition. We say that an immersed hypersurface $\Sigma^{n}$ in $\mathcal H^{n+1}$ is {\em locally tangent from above} to a spacelike hyperplane $\mathcal E_{\widetilde{\tau}}$ of $\mathcal H^{n+1}$, when there exist a point $p\in\Sigma^n$ and a neighborhood $\mathcal U\subset\Sigma^n$ of $p$ such that $l_a(p)=\widetilde{\tau}$ and $l_a(q)\geq\widetilde{\tau}$ for all $q\in\mathcal U$ (see Figure 3).
	
	\begin{center}
		\centering
		\includegraphics[height=5cm]{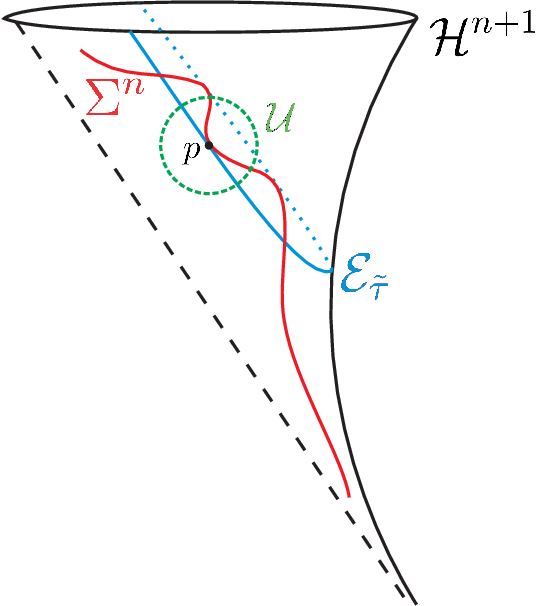} \\
		Figure 3: $\Sigma^n$ is locally tangent from above to a spacelike hyperplane $\mathcal E_{\tilde{\tau}}$.
	\end{center}
	
	In this setting, we obtain the following rigidity result:
	
	\begin{theorem}\label{thm3-H}
		Let $\psi:\Sigma^{n}\rightarrow\mathcal{H}^{n+1}$ be a complete spacelike hypersurface of $\mathcal{H}^{n+1}$ contained in the closure of the interior domain enclosed by a spacelike hyperplane $\mathcal E_\tau$ orthogonal to a nonzero null vector $a \in \mathbb{L}^{n+2}$, and locally tangent from above to a spacelike hyperplane $\mathcal E_{\tilde{\tau}}$, with $\tilde{\tau} \leq \tau$. Suppose that $H$ is bounded and, for some $1 \leq r \leq n - 1$, $H_{r+1}$ is positive and such that
		\begin{equation}\label{H1-teor3}
			H_r \leq H_{r+1}.
		\end{equation}
		If
		\begin{equation}\label{H2-teo3}
			|a^\top| \leq C \inf_{\Sigma} (H_{r+1} - H_r),
		\end{equation}
		for some positive constant $C$, then $\Sigma^{n}$ must be the spacelike hyperplane $\mathcal E_{\tilde{\tau}}$.
	\end{theorem}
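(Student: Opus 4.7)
My plan is to leverage the local tangency hypothesis to produce an elliptic point on $\Sigma^n$, and then run a variant of the proof of Theorem \ref{thm2-H}. The elliptic point will take over the role of the two hypotheses ($K_\Sigma\le 1$ bounded from below, and $\beta\le H_r$) that are present in Theorem \ref{thm2-H} but absent here.

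The first step is to extract the elliptic point. The local tangency from above is precisely the statement that the height function $l_a$ attains a local minimum at the tangency point $p$, with $l_a(p)=\tilde\tau>0$. For the closed conformal timelike vector field $V$ coming from \eqref{campo-conforme} one has $|V|^{2}|_{\Sigma}=l_a^{2}$ and $\mathrm{Div}\,V=(n+1)l_a$, so $|V|_\Sigma$ also attains a local minimum at $p$ while $\mathrm{Div}\,V$ does not vanish there. Lemma \ref{lemma:existence of an elliptic point} then yields an elliptic point $p_0\in\Sigma^n$. With this elliptic point in hand and the hypothesis $H_{r+1}>0$, Lemma \ref{lemma:positiveness of P1} (for $r=1$) or Lemma \ref{lemma:ellipticity of Lr} (for $r\ge 2$) guarantees that $P_k$ is positive definite and $H_k>0$ on all of $\Sigma^n$ for every $1\le k\le r$.

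Given the global positivity of $H_1,\dots,H_{r+1}$, the Newton--Maclaurin (Garding) inequalities for hyperbolic polynomials apply pointwise on $\Sigma^n$. In particular, $H_r^{1/r}\ge H_{r+1}^{1/(r+1)}$ combined with \eqref{H1-teor3} forces $H_r\ge 1$ on $\Sigma^n$; this pointwise lower bound plays the role of the constant $\beta$ in Theorem \ref{thm2-H}. At the same time, $H_r^{1/r}\le H$ together with the boundedness of $H$ shows that $H_r$, and hence $\mathrm{tr}\,P_r=c_rH_r$, is bounded above. Finally, positive-definiteness of $P_1$ gives $\lambda_i\ge -nH$ for each principal curvature $\lambda_i$, and since $\sum_i\lambda_i=-nH$ is bounded this shows that all $\lambda_i$ are bounded, so that the Gauss equation \eqref{Gauss equation} yields a lower bound for $K_\Sigma$.

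At this point all hypotheses of Lemma \ref{lemma:Maximum Principle} are in place with $\mathcal{P}=P_r$ and $f=l_a$ (bounded above by $\tau$). Applying it gives a sequence $(p_k)\subset\Sigma^n$ with $l_a(p_k)\to\sup_\Sigma l_a$ and $\limsup_k L_r l_a(p_k)\le 0$. The pointwise inequality $L_rl_a\ge c_r(H_{r+1}-H_r)l_a\ge 0$ coming from \eqref{Lrla}, \eqref{normaa_0} and \eqref{H1-teor3}, together with $\sup_\Sigma l_a\ge\tilde\tau>0$, then forces $\inf_\Sigma(H_{r+1}-H_r)=0$. By \eqref{H2-teo3} this gives $a^\top\equiv 0$, so $l_a$ is constant on $\Sigma^n$, and the tangency at $p$ identifies this constant as $\tilde\tau$, forcing $\Sigma^n=\mathcal E_{\tilde\tau}$. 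I expect the most delicate step to be the derivation of the pointwise bound $H_r\ge 1$ (replacing the constant $\beta$ of Theorem \ref{thm2-H}) and of the lower bound for $K_\Sigma$ from the elliptic point supplied by the tangency; once these are in place the argument is essentially a rerun of Theorem \ref{thm2-H}.
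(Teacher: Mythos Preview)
Your proposal is correct and follows essentially the same route as the paper: use the local tangency to produce an elliptic point via Lemma~\ref{lemma:existence of an elliptic point}, invoke Lemmas~\ref{lemma:positiveness of P1}/\ref{lemma:ellipticity of Lr} with $H_{r+1}>0$ to make $P_r$ positive definite, verify the hypotheses of Lemma~\ref{lemma:Maximum Principle} for $L_r$ and the bounded function $l_a$, and then conclude $\inf_\Sigma(H_{r+1}-H_r)=0$ from \eqref{Lrla}. Two small remarks: first, your detour through Newton--Maclaurin to obtain $H_r\ge 1$ is correct but unnecessary here---the paper works directly with $P_r$ (not $H_rP_r$ as in Theorem~\ref{thm2-H}), so no lower bound on $H_r$ is needed; second, to bound the principal curvatures the paper simply uses $H_2>0$ in \eqref{norm of the shape operator} to get $|A|^2\le n^2H^2$, which is a bit more direct than your argument via the eigenvalues of $P_1$, though both are valid.
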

	
	\begin{proof}
		Let as consider the vector field on $\mathcal{H}^{n+1}$ defined in the beginning of this section, namely $V(p) = - \langle p, a \rangle p + a$. It is not difficult to verify that $|V|_\Sigma = l_a$ and ${\rm Div } V(p) = (n+1) \langle p, a \rangle$. Thus, since we are supposing that $\Sigma^n$ is locally tangent from above to a spacelike hyperplane $\mathcal E_\tau$, we have that $|V|_\Sigma$ attains a minimum local on $\Sigma^n$. Consequently, we can apply Lemma \ref{lemma:existence of an elliptic point} to guarantee the existence of an elliptic point on $\Sigma^n$. Hence, since we are also supposing that $H_{r+1} > 0$, it follows from Lemma \ref{lemma:ellipticity of Lr} that $P_j$ is positive definite and, since ${\rm tr}(P_j) = c_j H_j$, $H_j$ is positive for all $1 \leq j \leq r$. Moreover, taking into account once more (\ref{norm of the shape operator}) and that $H_2 > 0$, we get that
		$$\sum_{i} \lambda_i^2 \leq n^2H^2.$$
		Consequently, the boundedness of $H$ implies in the boundedness of all principal curvatures of $\Sigma^n$. In particular, we have that $H_r$ is bounded and, from Gauss equation (\ref{Gauss equation}), $K_\Sigma$ is bounded from bellow. Thus, from (\ref{Lrla}) we have that
		\begin{equation}\label{Lr-teo4}
			L_r(\,l_a\,)\geq c_r\left(H_{r+1}-H_r\right)l_a\geq0.
		\end{equation}
		
		Hence, we can apply Lemma~\ref{lemma:Maximum Principle} to obtain a sequence $(p_k)_{k\geq 1}$ in $\Sigma^n$ such that
		\begin{equation}\label{eq.3BB_1}
			\lim_kl_a(p_k)=\sup_{\Sigma}l_a,\,\,\,\,\,\lim_k|\nabla l_a(p_k)|=0\,\,\,\,\,{\rm and}\,\,\,\,\limsup_k L_r(\,l_a\,)(p_k)\leq0.
		\end{equation}
		Consequently, from \eqref{eqaux:1_0} and \eqref{eq.3BB_1} we have that
		\begin{equation}\label{eq.3CC_1}
			0\geq\limsup_k L_r(\,l_a\,)(p_k)\geq c_r (\sup_{\Sigma}l_a)\limsup_k\left(H_{r+1} -H_r \right)(p_k)\geq0.
		\end{equation}
		Hence, since $\displaystyle\sup_{\Sigma}l_a>0$, from \eqref{eq.3CC_1} we get that
		$$\limsup_k\left(H_{r+1} -H_r \right)(p_k)=0$$
		and, in particular,
		\begin{equation}\label{eqaux:9_1}
			\inf_{\Sigma}\left(H_{r+1} -H_r \right)=0.
		\end{equation}
		Therefore, hypothesis \eqref{H2-teo3} jointly with \eqref{eqaux:9_1} guarantee that $\nabla l_a=a^\top$ vanishes identically on $\Sigma^n$, that is, $l_a$ is constant on $\Sigma^n$, which implies that $\Sigma^n$ is the spacelike hyperplane $\mathcal E_{\tilde{\tau}}$.
	\end{proof}	
	
	From the proof of Theorem~\ref{thm3-H} we obtain the following nonexistence result:
	
	\begin{corollary}\label{nonexistence:2}
		There do not exist complete spacelike hypersurfaces $\psi:\Sigma^{n}\rightarrow\mathcal{H}^{n+1}$ contained in the closure of the interior domain enclosed by a spacelike hyperplane $\mathcal E_\tau$ orthogonal to a nonzero null vector $a \in \mathbb{L}^{n+2}$, locally tangent from above to a spacelike hyperplane $\mathcal E_{\tilde{\tau}}$, with $\tilde{\tau} \leq \tau$, having bounded mean curvature and such that, for some $1 \leq r \leq n - 1$, $H_r$ and $H_{r+1}$ are positive constant satisfying $H_r < H_{r+1}$.
	\end{corollary}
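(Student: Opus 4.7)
The plan is to argue by contradiction in direct parallel with Corollary~\ref{nonexistence:1}, invoking the structure of the proof of Theorem~\ref{thm3-H}. Specifically, I would suppose that such a complete spacelike hypersurface $\psi:\Sigma^n\to\mathcal H^{n+1}$ does exist, with $H_r$ and $H_{r+1}$ positive constants and $H_r<H_{r+1}$, and then re-run the argument of Theorem~\ref{thm3-H} up to the penultimate conclusion \eqref{eqaux:9_1}, which reads $\inf_{\Sigma}(H_{r+1}-H_r)=0$. Since here $H_{r+1}-H_r$ is a strictly positive constant, this infimum equals $H_{r+1}-H_r>0$, producing the contradiction.

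More concretely, the steps I would carry out (without invoking \eqref{H2-teo3}, which plays no role in reaching \eqref{eqaux:9_1}) are: first, use the hypothesis that $\Sigma^n$ is locally tangent from above to $\mathcal E_{\widetilde\tau}$ to see that $|V|_\Sigma=l_a$ attains a local minimum; combine this with the fact that $\mathrm{Div}\,V=(n+1)\langle\psi,a\rangle>0$ on $\mathcal H^{n+1}$ and apply Lemma~\ref{lemma:existence of an elliptic point} to produce an elliptic point $p_0\in\Sigma^n$. Since $H_{r+1}>0$ on $\Sigma^n$, Lemma~\ref{lemma:ellipticity of Lr} then yields that $P_j$ is positive definite (so $L_j$ is elliptic) and $H_j>0$ for all $1\leq j\leq r$; in particular $H_2>0$, and from \eqref{norm of the shape operator} together with the boundedness of $H$ we get $\sum_i\lambda_i^2\leq n^2H^2$, so the principal curvatures are bounded. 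Consequently $\mathrm{tr}(P_r)=c_rH_r$ is bounded on $\Sigma^n$, and by Gauss' equation \eqref{Gauss equation} the sectional curvature $K_\Sigma$ is bounded from below.

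Next, starting from the identity \eqref{Lrla} and using $f_a\geq l_a>0$ (which comes from $|a^\top|^2=f_a^2-l_a^2\geq 0$), one has
\begin{equation*}
L_r(l_a)=c_r(-H_rl_a+H_{r+1}f_a)\geq c_r(H_{r+1}-H_r)l_a\geq 0.
\end{equation*}
Since $l_a$ is bounded above by $\tau$ and the hypotheses of Lemma~\ref{lemma:Maximum Principle} are all met (with $\mathcal P=P_r$), we obtain a sequence $(p_k)_{k\geq 1}$ in $\Sigma^n$ with $\lim_k l_a(p_k)=\sup_\Sigma l_a>0$ and $\limsup_k L_r(l_a)(p_k)\leq 0$. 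Passing to the limit in the displayed inequality yields $\limsup_k(H_{r+1}-H_r)(p_k)=0$, hence $\inf_\Sigma(H_{r+1}-H_r)=0$. Because $H_r$ and $H_{r+1}$ are constants with $H_r<H_{r+1}$, this infimum is the positive number $H_{r+1}-H_r$, a contradiction.

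The only delicate point — which I would not expect to be a real obstacle here because all the machinery is already set up in Theorem~\ref{thm3-H} — is ensuring that the hypotheses of the Omori–Yau-type Lemma~\ref{lemma:Maximum Principle} are genuinely satisfied in the present constant-curvatures setting; that reduces to checking the boundedness of $\mathrm{tr}(P_r)=c_rH_r$ and of $l_a$, and to the lower bound on $K_\Sigma$, which are all secured in the preceding paragraph. Everything else is a clean repetition of the Theorem~\ref{thm3-H} argument terminating at \eqref{eqaux:9_1}.
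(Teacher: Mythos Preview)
Your proposal is correct and follows exactly the approach intended by the paper: the corollary is stated as a direct consequence of the proof of Theorem~\ref{thm3-H}, and (in parallel with Corollary~\ref{nonexistence:1}) one reaches \eqref{eqaux:9_1} without ever using hypothesis \eqref{H2-teo3}, which then contradicts $H_{r+1}-H_r$ being a positive constant. Your verification that the hypotheses of Lemma~\ref{lemma:Maximum Principle} are met is also faithful to the paper's reasoning.
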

	
	\section{Further rigidity results in $\mathcal H^{n+1}$}\label{sec:further results}
	
	Motivated by the fact that the spacelike hyperplanes $\mathcal E_\tau$ of $\mathcal H^{n+1}$ satisfy the condition $l_{a}=f_{a}$ (considering the unit normal fields $N_{\tau}=x-\frac{1}{\tau}a$), in the last section of this manuscript we present further rigidity results supposing that the height and angle functions of the spacelike hypersurfaces are linearly related. We start proving the following:
	
	\begin{theorem}\label{thm4-H}
		Let $\psi:\Sigma^{n}\rightarrow\mathcal{H}^{n+1}$ be a complete spacelike hypersurface of $\mathcal{H}^{n+1}$ contained in the closure of the interior domain enclosed by a spacelike hyperplane $\mathcal E_\tau$ orthogonal to a nonzero null vector $a \in \mathbb{L}^{n+2}$. Suppose that $l_{a}=\lambda f_{a}$ for some positive constant $\lambda\in\mathbb{R}$, the mean curvature $H$ of $\Sigma^{n}$ is bounded and that
		\begin{equation}\label{Hip-teo5}
			H_2\geq 1.
		\end{equation}
		Then $\Sigma^n$ is a spacelike hyperplane $\mathcal E_{\tilde{\tau}}$ with $\tilde{\tau} \leq \tau$.
	\end{theorem}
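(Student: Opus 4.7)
The plan is to apply the Omori--Yau maximum principle (Lemma~\ref{lemma:Maximum Principle}) with $\mathcal{P}=I$ (so that $\mathcal{L}=\Delta=L_0$) to the height function $l_a$, exploiting the fact that the linear relation $l_a=\lambda f_a$ collapses the $r=0$ specialisation of \eqref{Lrla} into an inequality involving only $H$ and $\lambda$. First, from \eqref{normaa_0} one has
\begin{equation*}
	|a^\top|^2=f_a^2-l_a^2=(1-\lambda^2)f_a^2\geq 0,
\end{equation*}
and since $l_a,f_a>0$, this forces $0<\lambda\leq 1$, with equality $\lambda=1$ equivalent to $a^\top\equiv 0$ on $\Sigma^n$. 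Second, the Cauchy--Schwarz inequality $H^2\geq H_2$ together with $H_2\geq 1$ yields $H^2\geq 1$; taking the orientation consistent with $f_a>0$ (for which a hyperplane $\mathcal{E}_\tau$ has $H=1$), we work with $H\geq 1\geq\lambda$. Then the $r=0$ specialisation of \eqref{Lrla} reads
\begin{equation*}
	\Delta l_a=n(-l_a+f_a H)=nf_a(H-\lambda)\geq 0,
\end{equation*}
so $l_a$ is subharmonic.

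Next I verify the hypotheses of Lemma~\ref{lemma:Maximum Principle}. The function $l_a\leq\tau$ is bounded above; $\mathcal{P}=I$ is positive definite with bounded trace $n$. For the sectional curvature bound, the boundedness of $H$ combined with $H_2\leq H^2$ bounds $H_2$ from above, so $|A|^2=n^2H^2-n(n-1)H_2$ is bounded, every principal curvature is bounded, and the Gauss equation \eqref{Gauss equation} yields $K_\Sigma$ bounded from below. Lemma~\ref{lemma:Maximum Principle} then furnishes a sequence $(p_k)\subset\Sigma^n$ with $l_a(p_k)\to\sup_\Sigma l_a$ and $\limsup_k\Delta l_a(p_k)\leq 0$; combined with subharmonicity, $\Delta l_a(p_k)\to 0$.

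Finally, since $f_a(p_k)=l_a(p_k)/\lambda$ tends to the strictly positive limit $\sup_\Sigma l_a/\lambda$, the identity $\Delta l_a=nf_a(H-\lambda)$ forces $H(p_k)\to\lambda$; combined with $H\geq 1$ throughout $\Sigma^n$ this gives $\lambda\geq 1$, and hence $\lambda=1$. Then $|a^\top|^2\equiv 0$, so $\nabla l_a\equiv 0$ on the connected $\Sigma^n$, and $l_a$ is a constant $\tilde\tau\leq\tau$; i.e.\ $\Sigma^n=\mathcal{E}_{\tilde\tau}$. The main conceptual point is recognising that the correct operator here is the Laplacian rather than a higher-order $L_r$: for $r\geq 1$ the quantity $H_{r+1}-\lambda H_r$ cannot be controlled by the hypothesis $H_2\geq 1$ alone, whereas for $r=0$ the combination $H-\lambda$ is precisely what the Cauchy--Schwarz inequality and the bound $\lambda\leq 1$ conspire to make nonnegative.
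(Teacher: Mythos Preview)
Your proof is correct and takes a genuinely different route from the paper's. You work with the Laplacian alone: rewriting $\Delta l_a = n(-l_a+f_aH)=nf_a(H-\lambda)$ and invoking the Cauchy--Schwarz chain $H^2\geq H_2\geq 1$ together with $\lambda\leq 1$ to obtain subharmonicity, then squeezing $\lambda=1$ via Omori--Yau. The paper instead constructs the mixed operator $\mathcal{L}=\dfrac{\lambda^{-1}}{n(n-1)}L_1+\dfrac{1}{n}\Delta$, chosen precisely so that the $H$--terms cancel and one is left with $\mathcal{L}(l_a)=(\lambda^{-2}H_2-1)l_a$; the hypothesis $H_2\geq 1$ then enters directly, without any detour through Cauchy--Schwarz. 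Your route is more elementary (for $\mathcal P=I$ the positivity and trace hypotheses of Lemma~\ref{lemma:Maximum Principle} are trivial), whereas the paper's combination $\dfrac{\lambda^{-1}}{c_{r+1}}L_{r+1}+\dfrac{1}{c_r}L_r$ is exactly the template that generalises to Theorem~\ref{thm5-H}; so your closing remark that the Laplacian is ``the correct operator here'' rather than a higher $L_r$ does not reflect how the paper actually argues. Two small points: (i) to bound $|A|^2=n^2H^2-n(n-1)H_2$ from above you need $H_2$ bounded \emph{below} (immediate from $H_2\geq 1$), not above as you wrote; (ii) from $H^2\geq 1$ you only get $|H|\geq 1$, and selecting the sign $H\geq 1$ requires knowing that the orientation with $f_a>0$ is the one making $H>0$ --- this is the very step the paper also leaves implicit when it invokes Lemma~\ref{lemma:positiveness of P1} to declare $P_1$ positive definite for the already fixed $N$.
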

	
	\begin{proof}
		Since $l_{a}=\lambda f_{a}$, we observe that
		\begin{equation}\label{eqaux:6A_5}
			|\nabla l_{a}|^{2}=f_{a}^{2}-l_{a}^{2}=(\lambda^{-2}-1)l_{a}^{2}.
		\end{equation}
		In particular, from \eqref{eqaux:6A_5} we have that $\lambda^{-2}-1\geq0$. Now, we define on $\Sigma^n$ the following operator $\mathcal{L} : C^{\infty}(\Sigma) \longrightarrow C^{\infty}(\Sigma)$ by
		\begin{equation}\label{Operador L_5}
			\mathcal{L} f = \displaystyle\frac{\lambda^{-1}}{n(n-1)}L_1 f +\frac{1}{n}\Delta f,
		\end{equation}
		From equation \eqref{Lrla} and the hypothesis \eqref{Hip-teo5}, we obtain that
		\begin{equation}\label{lambda-equation_5}
			\begin{aligned}
				\mathcal{L} (l_a)
				& = \dfrac{\lambda^{-1}}{n(n-1)} \{ n(n-1)( -l_a H_1 + f_a H_2 )  \} + \dfrac{1}{n} \{  n(  -l_a + f_a H_1 )  \} \\
				& = \lambda^{-2} H_2 l_a - l_a  \geq (\lambda^{-2} - 1) l_a \geq 0.
			\end{aligned}
		\end{equation}	
		Hence, since we are also supponsing that $H_2 > 0$, it follows from Lemma \ref{lemma:ellipticity of Lr} that $P_1$ is positive definite, consequently, $H$ is positive. Moreover, we know that
		\begin{equation}\label{traco de P_j_5}
			{\rm tr}(P_1) = c_1 H.
		\end{equation}
		Thus, taking into account once more that $H_2 \geq 1$ and $H_2 < H^2$, from \eqref{norm of the shape operator} we get that
		$$\sum_{i} \lambda_i^2 \leq n^2H^2.$$
		Consequently, the boundedness of $H$ implies in the boundedness of all principal curvatures of $\Sigma^n$. From Gauss equation (\ref{Gauss equation}), $K_\Sigma$ is bounded from bellow. Since our assumption that $\Sigma^n$ is contained in the closure of the interior domain enclosed by $\mathcal E_{\tau}$ implies that $l_a$ is bounded, we can apply Lemma~\ref{lemma:Maximum Principle} to get a sequence $(p_k)_{k\geq 1}$ in $\Sigma^n$ such that
		\begin{equation}\label{teo4-omori_5}
			\lim_kl_a(p_k)=\sup_{\Sigma}l_a,\,\,\,\,\,\lim_k|\nabla l_a(p_k)|=0\,\,\,\,\,{\rm and}\,\,\,\,\limsup_k\mathcal L(\,l_a\,)(p_k)\leq0.
		\end{equation}
		Hence, from \eqref{teo4-omori} and \eqref{lambda-equation}, we obtain that
		\begin{equation}\label{teo4-final_5}
			0\geq\limsup_k \mathcal{L}(\,l_a\,)(p_k)\geq (\lambda^2 - 1) (\sup_{\Sigma}l_a)\geq0.
		\end{equation}
		So, as $\sup_\Sigma l_a > 0$, then
		$$\lambda^{-2} - 1 = 0.$$
		Therefore, returning to \eqref{lambda-equation}, we obtain that $\lambda = 1$. From \eqref{eqaux:6A}, we have that $ \nabla l_a = 0$ and the height function $l_a$ is constant on $\Sigma^n$, consequently, $\Sigma^n$ is a spacelike hyperplane $\mathcal E_{\tilde{\tau}}$ for some $\tilde{\tau} \leq \tau$.
	\end{proof}	
	
	From Theorem~\ref{thm4-H} and using once more the relation between $H_2$ and the normalized scalar curvature, we obtain the following consequence: 
	
	\begin{corollary}\label{crl2-H}
		Let $\psi:\Sigma^{n}\rightarrow\mathcal{H}^{n+1}$ be a complete spacelike hypersurface of $\mathcal{H}^{n+1}$ contained in the closure of the interior domain enclosed by a spacelike hyperplane $\mathcal E_\tau$ orthogonal to a nonzero null vector $a \in \mathbb{L}^{n+2}$. Suppose that $l_{a}=\lambda f_{a}$, for some positive constant $\lambda\in\mathbb{R}$, the mean curvature is bounded and the normalized scalar curvature is nonpositive. Then, $\Sigma^n$ is a spacelike hyperplane $\mathcal E_{\tilde{\tau}}$ with $\tilde{\tau} \leq \tau$.
	\end{corollary}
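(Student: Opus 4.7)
The plan is short: this corollary is designed to be a direct specialization of Theorem~\ref{thm4-H} under a curvature hypothesis that is more geometrically natural than $H_2\geq 1$. The only substantive step is to translate ``normalized scalar curvature nonpositive'' into the inequality required by Theorem~\ref{thm4-H}, via the identity $H_2=1-R$ that was already invoked in the remark preceding Corollary~\ref{crl1}.

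First I would record the identity $H_2=1-R$. From the Gauss equation $K_\Sigma(e_i,e_j)=1-\lambda_i\lambda_j$ used throughout the paper, summing over distinct indices gives
\begin{equation*}
n(n-1)R=\sum_{i\neq j}K_\Sigma(e_i,e_j)=n(n-1)-\sum_{i\neq j}\lambda_i\lambda_j.
\end{equation*}
Since $\sum_{i\neq j}\lambda_i\lambda_j=(\operatorname{tr} A)^2-|A|^2=n^2H^2-(n^2H^2-n(n-1)H_2)=n(n-1)H_2$ by \eqref{norm of the shape operator}, we conclude that $R=1-H_2$, that is, $H_2=1-R$.

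Next, the hypothesis that the normalized scalar curvature $R$ is nonpositive immediately yields $H_2=1-R\geq 1$, which is precisely condition~\eqref{Hip-teo5} of Theorem~\ref{thm4-H}. All the remaining assumptions of that theorem are already part of the corollary's hypotheses: $\Sigma^n$ is complete, is contained in the closure of the interior domain enclosed by a spacelike hyperplane $\mathcal{E}_\tau$ orthogonal to $a$, satisfies $l_a=\lambda f_a$ for some positive constant $\lambda$, and has bounded mean curvature $H$. Applying Theorem~\ref{thm4-H} then gives that $\Sigma^n$ is a spacelike hyperplane $\mathcal{E}_{\tilde{\tau}}$ with $\tilde{\tau}\leq\tau$, as claimed.

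There is essentially no technical obstacle here: the whole proof is a one-line reduction. The only point to verify carefully is the sign conventions, specifically that the paper's definition $H_r=(-1)^rS_r/\binom{n}{r}$ is consistent with the Gauss equation as written, so that the relation $H_2=1-R$ holds with the sign exhibited above; once this is in hand, the corollary follows from Theorem~\ref{thm4-H} without further work.
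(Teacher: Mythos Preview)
Your proof is correct and follows exactly the approach indicated in the paper: the corollary is stated immediately after the remark that it follows from Theorem~\ref{thm4-H} together with the relation $H_2=1-R$, and your argument simply makes this one-line reduction explicit. The additional derivation of $H_2=1-R$ from the Gauss equation and \eqref{norm of the shape operator} is a welcome bit of detail that the paper leaves implicit.
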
	
	
	We close this paper extending Theorem~\ref{thm4-H} for the context of higher order mean curvatures. 
	
	\begin{theorem}\label{thm5-H}
		Let $\psi:\Sigma^{n}\rightarrow\mathcal{H}^{n+1}$ be a complete spacelike hypersurface of $\mathcal{H}^{n+1}$ contained in the closure of the interior domain enclosed by a spacelike hyperplane $\mathcal E_\tau$ orthogonal to a nonzero null vector $a \in \mathbb{L}^{n+2}$, and locally tangent from above to a spacelike hyperplane $\mathcal E_{\tilde{\tau}}$, with $\tilde{\tau} \leq \tau$. Suppose that $l_{a}=\lambda f_{a}$ for some positive constant $\lambda\in\mathbb{R}$, and that, for some $1\leq r\leq n-2$, the $r$-th mean curvature $H_r$ of $\Sigma^{n}$ is bounded and such that
		\begin{equation}\label{Hip-teo4}
			\beta\leq H_{r}\leq H_{r+2},
		\end{equation}
		where $\beta$ is a positive constant. Then, $\Sigma^n$ must be the spacelike hyperplane $\mathcal E_{\tilde{\tau}}$.
	\end{theorem}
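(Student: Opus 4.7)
The plan is to run the same scheme as in Theorem~\ref{thm4-H}, but with a linear combination of $L_r$ and $L_{r+1}$ chosen so that the $H_{r+1}$ cross-terms cancel when the operator is applied to $l_a$. Concretely, I would set
\begin{equation*}
\mathcal L := \frac{\lambda^{-1}}{c_{r+1}}L_{r+1}+\frac{1}{c_r}L_r,
\qquad
\mathcal P := \frac{\lambda^{-1}}{c_{r+1}}P_{r+1}+\frac{1}{c_r}P_r,
\end{equation*}
so that $\mathcal L f=\mathrm{tr}(\mathcal P\nabla^2 f)$. Since $l_a=\lambda f_a$ gives $f_a=\lambda^{-1}l_a$, and $|\nabla l_a|^2=f_a^2-l_a^2=(\lambda^{-2}-1)l_a^2\ge 0$ forces $0<\lambda\le 1$, applying \eqref{Lrla} to both $L_r$ and $L_{r+1}$ makes the $H_{r+1}$ terms cancel and yields
\begin{equation*}
\mathcal L(l_a)=l_a\bigl(\lambda^{-2}H_{r+2}-H_r\bigr)\ge(\lambda^{-2}-1)\beta\,l_a\ge 0,
\end{equation*}
where hypothesis \eqref{Hip-teo4} is used at the end.

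Next I would verify the hypotheses of Lemma~\ref{lemma:Maximum Principle}. The containment in the closure of the interior of $\mathcal E_\tau$ makes $l_a$ bounded from above. Since $\Sigma^n$ is locally tangent from above to $\mathcal E_{\tilde\tau}$, the restriction $|V|_\Sigma=l_a$ attains a local minimum, so Lemma~\ref{lemma:existence of an elliptic point} produces an elliptic point on $\Sigma^n$; then $H_{r+2}\ge\beta>0$ together with Lemma~\ref{lemma:ellipticity of Lr} guarantees that $P_1,\ldots,P_{r+1}$ are positive definite, hence $\mathcal P$ is positive semi-definite. The delicate step is to control $\mathrm{tr}(\mathcal P)=\lambda^{-1}H_{r+1}+H_r$ and the sectional curvature from below using only that $H_r$ is bounded. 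I would handle this by observing that $\Sigma^n$ lies in the G{\aa}rding cone $\Gamma_{r+2}$ (as $H_1,\ldots,H_{r+2}>0$), so that Maclaurin's inequality yields $H_{r+1}\le H_r^{(r+1)/r}$, bounding the trace; then iterating Newton's inequality $H_{k-1}H_{k+1}\le H_k^2$ downward from $k=r$, together with the lower bound $H_{r+1}\ge H_r\ge\beta$ obtained from $H_{r+1}^2\ge H_rH_{r+2}\ge H_r^2$, gives upper bounds on $H_{r-1},\ldots,H_1$ and on $H_2$; from $|A|^2=n^2H^2-n(n-1)H_2$ the principal curvatures are then bounded, and Gauss equation \eqref{Gauss equation} supplies the required lower bound on $K_\Sigma$.

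Having verified all the hypotheses, Lemma~\ref{lemma:Maximum Principle} produces a sequence $(p_k)_{k\ge 1}$ with $l_a(p_k)\to\sup_\Sigma l_a$ and $\limsup_k\mathcal L(l_a)(p_k)\le 0$. Combining with $\mathcal L(l_a)\ge(\lambda^{-2}-1)\beta\,l_a$ and $\sup_\Sigma l_a\ge\tilde\tau>0$ forces $\lambda^{-2}=1$, i.e.\ $\lambda=1$. Then $|\nabla l_a|^2\equiv 0$ on $\Sigma^n$, so $l_a$ is constant; evaluating at the tangent point gives $l_a\equiv\tilde\tau$, and therefore $\Sigma^n$ is the spacelike hyperplane $\mathcal E_{\tilde\tau}$. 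The hardest part of the plan is the chain of Newton--Maclaurin inequalities needed to promote the single hypothesis that $H_r$ is bounded into full control of the principal curvatures and of $K_\Sigma$ from below; everything else is a direct transcription of the Theorem~\ref{thm4-H} argument to the higher order setting.
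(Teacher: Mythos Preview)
Your proposal is correct and follows the paper's own strategy essentially verbatim: the paper also defines $\mathcal L=\frac{\lambda^{-1}}{c_{r+1}}L_{r+1}+\frac{1}{c_r}L_r$, computes $\mathcal L(l_a)=(\lambda^{-2}H_{r+2}-H_r)l_a\ge(\lambda^{-2}-1)H_r\,l_a\ge 0$, obtains an elliptic point from Lemma~\ref{lemma:existence of an elliptic point}, applies Lemma~\ref{lemma:Maximum Principle}, and concludes $\lambda=1$ exactly as you do.

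Where your write-up differs is in the verification of the hypotheses of Lemma~\ref{lemma:Maximum Principle}, and here you are in fact more careful than the paper. The paper invokes Lemma~\ref{lemma:ellipticity of Lr} with the condition $H_r>0$, which literally yields only $P_1,\ldots,P_{r-1}$ positive definite; you correctly use $H_{r+2}\ge\beta>0$ to obtain $P_1,\ldots,P_{r+1}$ positive definite, which is what is actually needed for $\mathcal P$ to be positive semi-definite. Likewise, the paper passes from ``$H_r$ bounded'' to ``principal curvatures bounded'' by writing ``the boundedness of $H$'' without justification; your Newton--Maclaurin chain (using that $\Sigma^n$ lies in the G{\aa}rding cone $\Gamma_{r+2}$, so $H_k^{1/k}$ is nonincreasing and $H_{k-1}H_{k+1}\le H_k^2$) legitimately promotes the single bound on $H_r$ to bounds on $H_1,\ldots,H_{r+1}$, hence on $\mathrm{tr}(\mathcal P)$, on $|A|^2$ via \eqref{norm of the shape operator}, and on $K_\Sigma$ via \eqref{Gauss equation}. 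So your ``hardest part'' is a genuine contribution that patches a gap the paper glosses over; the rest is indeed a direct transcription of the Theorem~\ref{thm4-H} scheme.
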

	
	\begin{proof}
		Since $l_{a}=\lambda f_{a}$, we observe that
		\begin{equation}\label{eqaux:6A}
			|\nabla l_{a}|^{2}=f_{a}^{2}-l_{a}^{2}=(\lambda^{-2}-1)l_{a}^{2}.
		\end{equation}
		In particular, from \eqref{eqaux:6A} we have that $\lambda^{-2}-1\geq0$. Now, we define on $\Sigma^n$ the following operator $\mathcal{L} : C^{\infty}(\Sigma) \longrightarrow C^{\infty}(\Sigma)$ by
		\begin{equation}\label{Operador L}
			\mathcal{L} f = \displaystyle\frac{\lambda^{-1}}{c_{r+1}}L_{r+1} f +\frac{1}{c_{r}}L_{r} f,
		\end{equation}
		where $c_i=(i+1)\displaystyle{{n\choose i+1}}$. From equation \eqref{Lrla} and the hypothesis \eqref{Hip-teo4} , we obtain that
		\begin{equation}\label{lambda-equation}
			\begin{aligned}
				\mathcal{L} (l_a)
				& = \dfrac{\lambda^{-1}}{c_{r+1}} L_{r+1} l_a + \dfrac{1}{c_r} L_r l_a \\
				& = \dfrac{\lambda^{-1}}{c_{r+1}} \{ c_{r+1} ( -l_a h_{r+1} + f_a H_{r+2}  )  \} + \dfrac{1}{c_r} \{ c_r( -l_a H_r + f_a H_{r+1}  )  \} \\
				& = \lambda^{-2} H_{r+2} l_a - l_a H_r \geq (\lambda^{-2} - 1) H_r l_a \geq 0
			\end{aligned}
		\end{equation}
		
		On the other hand, we can reason as in the beginning of the proof of Theorem~\ref{thm3-H} to guarantee the existence of an elliptic point on $\Sigma^n$. Hence, since we are also supposing that $H_r > 0$, it follows from Lemma \ref{lemma:ellipticity of Lr} that $P_j$ is positive definite, consequently, $H_j$ is positive for all $1 \leq j \leq r-1$. Moreover, taking into account once more that $H_2 > 0$ and $H_2 < H^2$, from \eqref{norm of the shape operator} we get once more that
		$$\sum_{i} \lambda_i^2 \leq n^2H^2.$$
		Consequently, the boundedness of $H$ implies in the boundedness of all principal curvatures of $\Sigma^n$ and, hence, ${\rm tr}(P_j) = c_j H_j$ is also bounded. 
		From Gauss equation (\ref{Gauss equation}) we also have that $K_\Sigma$ is bounded from bellow. So, since our assumption that $\Sigma^n$ is contained in the closure of the interior domain enclosed by $\mathcal E_{\tau}$ implies that $l_a$ is bounded, we can apply Lemma~\ref{lemma:Maximum Principle} to get a sequence $(p_k)_{k\geq 1}$ in $\Sigma^n$ such that
		\begin{equation}\label{teo4-omori}
			\lim_kl_a(p_k)=\sup_{\Sigma}l_a,\,\,\,\,\,\lim_k|\nabla l_a(p_k)|=0\,\,\,\,\,{\rm and}\,\,\,\,\limsup_k\mathcal L(\,l_a\,)(p_k)\leq0.
		\end{equation}
		Hence, from \eqref{lambda-equation} and \eqref{teo4-omori} we obtain that
		\begin{equation}\label{teo4-final}
			0\geq\limsup_k \mathcal{L}(\,l_a\,)(p_k)\geq (\lambda^2 - 1) \beta (\sup_{\Sigma}l_a)\geq0.
		\end{equation}
		Thus, since $\sup_\Sigma l_a > 0$, we get $\lambda^{-2}-1=0$. Consequently, returning to \eqref{lambda-equation} we conclude that $\lambda = 1$. Therefore, from \eqref{eqaux:6A}, we have that $ \nabla l_a = 0$ and the height function is constant on $\Sigma^n$, which implies that $\Sigma^n$ must be the spacelike hyperplane $\mathcal E_{\tilde{\tau}}$.
	\end{proof}
	
	\section*{Acknowledgements}
	The first author is partially supported by CAPES, Brazil. The second author is partially supported by CNPq, Brazil, grant 301970/2019-0. The third author is partially supported by CNPq, Brazil, grant 311224/2018-0.
	
	\bibliographystyle{amsplain}

\begin{thebibliography}{30}
		
		\bibitem{albujer} A.L. Albujer \and L.J. Al\'{\i}as,
		{\em Spacelike hypersurfaces with constant mean curvature in the steady state space},
		Proc. American Math. Soc. {\bf137} (2009), 711--721.
		
		\bibitem{ABC:03} L.J. Al\'{\i}as, A. Brasil Jr. \and A.G. Colares,
		{\em Integral formulae for spacelike hypersurfaces in conformally stationary spacetimes and applications},
		Proc. Edinburgh Math. Soc. {\bf46} (2003), 465--488.
		
		\bibitem{ac07} L.J. Al\'{\i}as \and A.G. Colares,
		{\em Uniqueness of spacelike hypersurfaces with constant higher order mean curvature in generalized Robertson-Walker spacetime},
		Math. Proc. Cambridge Philos. Soc. {\bf143} (2007), 703--729.
		
		\bibitem{ALM:01} L.J. Al\'{\i}as, J.H. de Lira \and J.M. Malacarne,
		{\em Constant higher order mean curvature hypersufaces in Riemannian spaces},
		J. Inst. Math. Jussieu {\bf5} (2006), 527--562.
		
		\bibitem{Alias:12} L.J. Al\'{\i}as, D. Impera \and M. Rigoli,
		{\em Spacelike hypersurfaces of constant higher order mean curvature in generalized Robertson-Walker spacetimes},
		Math. Proc. Camb. Philos. Soc. {\bf152} (2012), 365--383.
		
		\bibitem{Alias-Mastrolia-Rigoli:16} L.J. Al\'{\i}as, P. Mastrolia \and M. Rigoli,
		{\em Maximum Principles and Geometric Applications},
		Springer Monographs in Mathematics, New York, 2016.
		
		\bibitem{Aquino:15} C.P. Aquino, H.F. de Lima, F.R. dos Santos \and M.A.L. Vel\'{a}squez,
		{\em Characterizations of spacelike hyperplanes in the steady state space via generalized maximum principles},
		Milan J. Math. {\bf83} (2015), 199--209.
		
		\bibitem{Barbosa:97} J.L.M. Barbosa \and A.G. Colares,
		{\em Stability of hypersurfaces with constant $r$-mean curvature},
		Ann. Global Anal. Geom. {\bf15} (1997), 277--297.
		
		\bibitem{bgold} H. Bondi \and T. Gold,
		{\em On the generation of magnetism by fluid motion},
		Monthly Not. Roy. Astr. Soc. {\bf108} (1948), 252--270.
		
		\bibitem{CH10} A.G. Colares \and H.F. de Lima,
		{\em Spacelike hypersurfaces with constant mean curvature in the steady state space},
		Bull. Belg. Math. Soc. Simon Stevin {\bf17} (2010), 287--302.
		
		\bibitem{CH12} A.G. Colares \and H.F. de Lima,
		{\em On the rigidity of spacelike hypersurfaces immersed in the steady state space $\mathcal{H}^{n+1}$},
		Publ. Math. Debrecen {\bf81} (2012), 103--119.
		
		\bibitem{Elbert:02} M.F. Elbert,
		{\em Constant positive $2$-mean curvature hypersurfaces},
		Illinois. J. Math. {\bf46} (2002), 247--267.
		
		\bibitem{hawking} S.W. Hawking \and G.F.R. Ellis,
		{\em The Large Scale Structure of Spacetime},
		Cambridge Univ. Press, Cambridge, 1973.
		
		\bibitem{hoyle} F. Hoyle,
		{\em A new model for the expanding universe},
		Monthly Not. Roy. Astr. Soc. {\bf108} (1948), 372--382.
		
		\bibitem{sm99} S. Montiel,
		{\em Unicity of constant mean curvature hypersurface in some Riemannian manifolds},
		Indiana Univ. Math. J. {\bf48} (1999), 711--748.
		
		\bibitem{sm03} S. Montiel,
		{\em Complete non-compact spacelike hypersurfaces of constant mean curvature in de Sitter spaces},
		J. Math. Soc. Japan {\bf55} (2003), 915--938.
		
		\bibitem{Omori:67} H. Omori,
		{\em Isometric immersions of Riemannian manifolds},
		J. Math. Soc. Japan {\bf19} (1967), 205--214.
		
		\bibitem{Sw} S. Weinberg,
		{\em Gravitation and Cosmology: Principles ans Applications of the General Theory of Relativity},
		John Wiley \& Sons, New York, 1972.
		
		\bibitem{Yau:75} S.T. Yau,
		{\em Harmonic functions on complete Riemannian manifolds},
		Comm. Pure Appl. Math. {\bf28} (1975), 201--228.
		
	\end{thebibliography}
	
\end{document}